\documentclass[12pt]{amsart}

\newtheorem{theorem}{Theorem}[section]
\newtheorem{lemma}[theorem]{Lemma}

\newtheorem{proposition}[theorem]{Proposition}

\usepackage[bookmarks]{hyperref}
\usepackage{cite}
\usepackage[makeroom]{cancel}
\usepackage{graphicx,amscd,amsthm,amsfonts,amsopn,amssymb,verbatim,enumerate,xcolor}

\usepackage[letterpaper,left=1in,top=1in,right=1in,bottom=1in]{geometry}
\usepackage{xcolor}

\def\nint{\mathop{\diagup\kern-13.0pt\int}}

\def\bas{\begin{align*}}
\def\eas{\end{align*}}
\def\bi{\begin{itemize}}
\def\ei{\end{itemize}}

\def\emph#1{{\it #1}}

\theoremstyle{definition}
\newtheorem{remark}[theorem]{Remark}

\numberwithin{equation}{section}
\title[A conditional clock barrier at the $C^{1,\frac{1}{3}}$ threshold for axisymmetric Euler]{A conditional Lagrangian clock barrier at the $C^{1,\frac{1}{3}}$ threshold for axisymmetric Euler without swirl}
\author{Ovidiu-Neculai Avadanei}
\begin{document}
\begin{abstract}
   We consider axisymmetric no-swirl solutions to the three-dimensional incompressible Euler equations, with initial velocity in $C^{1,\alpha}\cap L^2$, where $\alpha\in\left[\frac{1}{3},1\right)$. Motivated by Shkoller's Lagrangian clock-and-driver framework for finite-time blow-up below the $C^{1,\frac{1}{3}}$ threshold, we introduce coherent Lagrangian classes of initial data and conditional solutions for which the same clock mechanism yields a supercritical/critical barrier when $\alpha\geq\frac{1}{3}$, with a genuinely depleted barrier for $\alpha>\frac{1}{3}$ and an exponential bound at the critical endpoint $\alpha=\frac{1}{3}$. In the general case, we formulate a matrix-clock criterion in terms of the smallest singular value of the deformation gradient and show that, under cusp-tail, Dini coherence, near-field compatibility, and bounded transverse-distortion hypotheses, this singular value cannot collapse in finite time. In the on-axis case, the criterion reduces to the scalar clock inequality $\displaystyle \dot{J}(t)\gtrsim -B(t)J(t)-CJ(t)^{3\alpha}$, which rules out Shkoller-type clock collapse for $\alpha\geq\frac{1}{3}$. These results do not enlarge the known Lorentz-space global regularity classes. Rather, they in particular identify the supercritical Lagrangian obstruction dual to Shkoller's subcritical blow-up mechanism in the case $\alpha>\frac{1}{3}$. 
\end{abstract}
\keywords{incompressible Euler equations, conditional global regularity for axisymmetric solutions with no-swirl}
\subjclass[2020]{Primary: 35Q35; Secondary: 35Q31}
\maketitle

\section{Introduction}
In this paper we are concerned with the incompressible Euler equations in three dimensions:
\begin{equation}\label{Incompressible Euler}
\begin{aligned}
   u_t+(u\cdot\nabla)u+\nabla p&=0\text{ in }[0,T)\times \mathbb{R}^3\\
   \nabla\cdot u&=0\text{ in }[0,T)\times \mathbb{R}^3\\
   u(0,\cdot)&=u_0.
   \end{aligned}
\end{equation}
It is known that the vorticity $\omega=\nabla\times u$ satisfies the equation
\begin{equation}\label{Vorticity Equation}
    \omega_t+(u\cdot\nabla)\omega=(\omega\cdot \nabla)u,
\end{equation}
where $u$ and $\omega$ are related by the Biot-Savart Law
\begin{equation}\label{Biot-Savart}
    u(t,x)=-\frac{1}{4\pi}\int_{\mathbb{R}^3}\frac{x-y}{|x-y|^3}\times\omega(t,y)\,dy
\end{equation}

In particular, the vorticity equation \eqref{Vorticity Equation} shows that any potential blow-up phenomenon is caused by the stretching term $(\omega\cdot \nabla)u$. Moreover, the Beale-Kato-Majda criterion \cite{2} also states that in order for blow-up to occur at a finite time $T_*$, one must have $\int_0^{T_*}\|\omega(t)\|_{L^\infty}\,dt=\infty$.

Throughout this paper, we shall work with axisymmetric solutions with no-swirl, denoted by $u$, for which, in cylindrical coordinates, one can write $u=u_re_r+u_ze_z$, and $\omega=\omega_\theta e_\theta$. In this setting if one defines $D_t:=\partial_t+u\cdot\nabla$, which is also known as the material derivative, it can be immediately seen that the \textbf{specific vorticity} $\frac{\omega^\theta}{r}$ satisfies $\displaystyle D_t\left(\frac{\omega^\theta}{r}\right)=0$, i.e. it is transported.
We shall always assume that the initial data $u_0$ satisfies $u_0\in C^{1,\alpha}\cap L^2$, $1>\alpha\geq\frac{1}{3}$, and has odd symmetry in $z$.

Smooth axisymmetric solutions with no-swirl are globally regular under the classical assumptions, and the transported specific vorticity $\frac{\omega^\theta}{r}$ is central to the known regularity theory \cite{28, 22, 25}. Thus, possible singularity formation in this class is naturally tied to low regularity or borderline control of the stretching mechanism. Moreover, it turns out that the stretching mechanism can generate long-time growth and outward migration phenomena even at high regularity (see \cite{16} and the references therein).

In a major breakthrough, Shkoller \cite{Shkoller} proved finite-time singularity formation for the axisymmetric no-swirl Euler equations in $\mathbb{R}^3$ with velocity in $C^{1,\alpha}$, where $\alpha\in\left(0,\frac{1}{3}\right)$.

To be more specific, he constructed a target blow-up profile, for which he proved type I blow-up, and then proved its stability under an admissible class of perturbations. This was the first result pertaining to the question of $C^{1,\alpha}$ singularity formation that was proved in a Lagrangian setting.

One of his ideas was to introduce a Lagrangian clock and driver framework, modeled by the collapse-clock and stagnation point equations
\begin{equation}
  \begin{aligned}
\frac{d}{dt}J(t)&=\frac{1}{2}W(t)J(t)\\
\frac{d}{dt}W(t)&=-\frac{1}{2}W(t)^2-\Pi_0(t)
\end{aligned} 
\end{equation}
Here, $W(t)=\partial_zu^z(0,0,t)$ is the axial strain, and $J(t)$ is the meridional Jacobian associated to the fluid flow (for a precise definition, see Theorem \ref{Main Result 2}), while $\Pi_0(t)$ is the non-local principal value contribution of the on-axis pressure Hessian. His idea was to construct a solution and a class of admissible perturbations with the property that $J(t)$ tends to zero in finite time, which is equivalent to singularity formation, and that the pressure strain $\Pi_0(t)$, which is the only obstruction that could have prevented blow-up, is completely dominated by the Riccati term $W(t)^2$.

Another ingredient in his paper was to introduce a kinematic drift law which also explains the $\alpha=\frac{1}{3}$ threshold. In a nutshell, the idea is that the particles which drive the on-axis strain at late times can only originate from ones whose Lagrangian polar angle does not exceed $J(t)^3$. Together with the H\" older cusp vanishing property of the target profile that he constructed, this implies that the driver sector is depleted by a factor of $J(t)^{3\alpha}$, a mechanism that he calls drift-induced depletion. Heuristically, $W(t)\simeq -\Gamma J(t)^{3\alpha-1}$, hence $\frac{d}{dt}J(t)\simeq -\Gamma J(t)^{3\alpha}$. The latter can have a finite-time zero only when $3\alpha<1$, which explains the threshold.

Another important idea was to prove that the ratio between the Riccati term $\frac{1}{2}W(t)^2$ and the pressure strain $\Pi_0(t)$ is greater or equal than $1$ in the model case (the blow-up profile), which means that the latter cannot prevent singularity formation.

In this paper, our goal is to turn around Shkoller's clock and driver Lagrangian framework and prove that in the context of an admissible class of initial data and conditional coherent solutions, blow-up cannot occur when $\alpha\geq\frac{1}{3}$. Our results are mechanism proofs, and show that Shkoller's framework not only explains how blow-up can occur when $\alpha<\frac{1}{3}$, but that it also explains why it cannot happen in the first place when $\alpha\geq\frac{1}{3}$. 

We shall work in cylindrical coordinates. Let $(R,Z)$ be the initial cylindrical coordinates of a particle in the poloidal $(r,z)$- plane at the initial time $t=0$, and let $\Theta\in[0,2\pi)$ denote its azimuthal angle. Let $\Phi_t$ be its flow map defined by
\begin{align*}
    \partial_t\Phi_t(R,\Theta,Z)&=u(t,\Phi_t(R,\Theta,Z))\\
    \Phi_0(R,\Theta,Z)&=(R,\Theta,Z).
\end{align*}

Given that the flow is axisymmetric without swirl, the azimuthal angle is conserved, in the sense that $\theta(t)\equiv\Theta$ for every $t$. There also exists a \textbf{poloidal flow map}
\begin{align*}
    \Phi(R,Z,t)=(\varphi^r(R,Z,t),\varphi^z(R,Z,t)),
\end{align*}
where
\begin{align*}
    \frac{d}{dt}(\varphi^r(R,Z,t),\varphi^z(R,Z,t))&=(u^r(\varphi^r(R,Z,t),\varphi^z(R,Z,t)),u^z(\varphi^r(R,Z,t),\varphi^z(R,Z,t)))\\
    (\varphi^r(R,Z,0),\varphi^z(R,Z,0))&=(R,Z).
\end{align*}
The flow $\Phi_t$ will also be given by
\begin{align*}
    \Phi_t(R,\Theta,Z)&=(\varphi^r(R,Z,t)\cos\Theta, \varphi^r(R,Z,t)\sin\Theta,\varphi^z(R,Z,t)).
\end{align*}
The main result is the following
\begin{theorem}\label{Main Result}
    Let $\alpha\in\left[\frac{1}{3},1\right)$, and $u$ be an axisymmetric no-swirl solution on $[0,T_*)$ with initial data $u_0\in \mathcal{S}^\alpha_{\rm general, 0}$. Assume that $u$ belongs to $\mathcal{S}^\alpha_{\rm general}([0,T_*))$, where the class of initial data $\mathcal{S}^\alpha_{\rm general, 0}$ and the class of conditional coherent solutions $\mathcal{S}^\alpha_{\rm general}([0,T_*))$ shall be defined in section \ref{Section 2}, and the $\mathcal{S}^\alpha_{\rm general}([0,T_*))$ constants remain finite. Then, $T_*$ is not a blow-up time.
\end{theorem}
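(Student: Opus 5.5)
The plan is to reduce non-blow-up at $T_*$ to a quantitative lower bound on the smallest singular value $\sigma_{\min}(t)$ of the poloidal deformation gradient $\nabla_{(R,Z)}\Phi(\cdot,t)$. We then close with a continuation criterion of Beale--Kato--Majda type. In the axisymmetric no-swirl setting, $\omega^\theta/r$ is transported, so $\omega^\theta(t,\Phi_t)=\frac{\varphi^r}{R}\,\omega_0^\theta$. Hence $\|\omega(t)\|_{L^\infty}$ can only become unbounded if radial stretching $\varphi^r/R$ (equivalently, through incompressibility, the meridional Jacobian $J=R\det\nabla\Phi/\varphi^r$) degenerates. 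Likewise, loss of the $C^{1,\alpha}$ norm of $u$ can only occur through collapse of $\nabla\Phi$ in some direction. So the first step is a lemma of the form: if $\inf_{t<T_*}\sigma_{\min}(t)>0$ and the class constants stay bounded, then $\int_0^{T_*}\|\omega\|_{L^\infty}\,dt<\infty$, and $u$ extends in $C^{1,\alpha}\cap L^2$ past $T_*$. This uses the bounded transverse-distortion hypothesis to convert a singular-value bound into control of $\varphi^r/R$ and of the H\"older seminorm of $\omega_0\circ\Phi^{-1}$.

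The second step is the matrix-clock differential inequality. Differentiating $\nabla\Phi$ along the flow gives $\frac{d}{dt}\nabla\Phi=(\nabla u)\circ\Phi\,\nabla\Phi$. Hence, for the minimizing unit vector $v(t)$, we get $\dot\sigma_{\min}\ge \langle S v,v\rangle\sigma_{\min}$, where $S$ is the symmetric strain evaluated along the trajectory. We then split the strain, as in Shkoller's driver decomposition, into a far-field and regular part and a near-field driver part. The far-field part is bounded by $B(t)$ using Dini coherence and near-field compatibility, and $B$ is assumed to satisfy $\int_0^{T_*}B<\infty$. The driver part is governed by the particles that can reach the stagnation region. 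By the kinematic drift law, these originate from a Lagrangian sector of angular width $\lesssim\sigma_{\min}^3$. The cusp-tail hypothesis says the vorticity in that sector is depleted by a factor $\sigma_{\min}^{3\alpha}$. Combined with the $\sigma_{\min}^{-1}$ amplification from the Biot--Savart kernel at scale $\sigma_{\min}$, this yields
\[
\dot\sigma_{\min}(t)\ge -B(t)\sigma_{\min}(t)-C\sigma_{\min}(t)^{3\alpha}.
\]
On the axis this is exactly the scalar inequality for $J$ quoted in the abstract.

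The third step is the ODE comparison. Set $y(t)=e^{\int_0^tB}\sigma_{\min}(t)$, so that $\dot y\ge -Ce^{(1-3\alpha)\int_0^tB}y^{3\alpha}$. If $\alpha>\frac13$, then $\frac{d}{dt}y^{1-3\alpha}\le C'(1-3\alpha)^{-1}\cdot(\text{negative sign})$, that is, $\frac{d}{dt}y^{1-3\alpha}\le (3\alpha-1)C'$. Hence $y^{1-3\alpha}$ grows at most linearly, which gives a positive lower bound $\sigma_{\min}(t)\gtrsim(\sigma_{\min}(0)^{1-3\alpha}+Ct)^{-1/(3\alpha-1)}e^{-\int B}$ on $[0,T_*]$; this is the depleted barrier. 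If $\alpha=\frac13$, the inequality is linear and Gr\"onwall gives $\sigma_{\min}(t)\ge\sigma_{\min}(0)e^{-\int_0^tB-Ct}$; this is the exponential endpoint bound. In either case $\sigma_{\min}$ stays bounded below up to $T_*$, and the first step concludes the proof.

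The main obstacle is the second step: deriving the $\sigma_{\min}^{3\alpha}$ driver bound rigorously in the general (off-axis, matrix) setting. The drift-law sector estimate must be made uniform along every trajectory, not only at the stagnation point. I would first attempt it by working in the Lagrangian frame adapted to $v(t)$. In that frame I would bound the principal-value strain integral by the cusp-tail profile integrated over the $\sigma_{\min}^3$-sector, and use near-field compatibility to absorb the commutator terms that arise from rotation of $v(t)$. A secondary subtlety is that $\sigma_{\min}$ is only Lipschitz, so the inequality must be read in the sense of Dini derivatives. This is harmless for the comparison argument.
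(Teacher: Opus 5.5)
Your architecture is the paper's: the Dini-derivative identity $\frac{d^+}{dt}\nu=\nu\inf_{e\in E_\nu}e\cdot Se$ for the smallest singular value, a driver bound of the form $B(t)+C\nu^{\beta}$, an ODE comparison split at $\alpha=\frac13$, and finally $\nu\ge c\Rightarrow\|F\|_{op}=\tau_1\le c^{-2}$, the Cauchy formula and Beale--Kato--Majda. Two small corrections on the setup. The bound $\tau_1\le c^{-2}$ follows from $\tau_1\tau_2\nu=1$ alone, so no transverse-distortion hypothesis is needed in your Step~1. Also, the class hypotheses are phrased through the SVD of the full $3\times3$ matrix $D_a\Phi_t(a_*)$, so the clock should be its smallest singular value, not that of the poloidal block.

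\textbf{The exponent in Step~2 is wrong.} It comes from identifying $\sigma_{\min}$ with $J$. On the axis $F_{\rm axis}=\mathrm{diag}(J_*^{-1},J_*^{-1},J_*^{2})$, so $\nu=J_*^2$ and $\tau_1=\tau_2=J_*^{-1}$. The $J^3$ drift law therefore gives a Lagrangian cone of aspect ratio $\nu/\tau_2\sim\nu^{3/2}$, not $\nu^3$. The amplification is the Cauchy factor $\|F(a_*,t)\|_{op}\le\tau_1\sim\nu^{-1/2}$, not $\nu^{-1}$, because kernel times block volume is $O(1)$ on the cone (the flow preserves volume). What the hypotheses actually yield is
\[
|\mathcal{D}_*(t)|\lesssim B(t)+M_\alpha\mathcal{T}_\alpha\,\nu^{\frac{3\alpha-1}{2}}\kappa^{\frac{\alpha+1}{2}},\qquad\text{hence}\qquad \frac{d^+}{dt}\nu\ge -B\nu-C\nu^{\frac{3\alpha+1}{2}}.
\]
On the axis this is exactly $\dot J_*\ge -BJ_*-CJ_*^{3\alpha}$. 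Your inequality $\dot\sigma\ge-B\sigma-C\sigma^{3\alpha}$ would instead give $W_*\ge -B-CJ_*^{6\alpha-2}$ there. For $\alpha>\frac13$ that is strictly stronger than $J_*^{3\alpha-1}$. The cusp bound saturated on the cone blocks produces a contribution of exactly order $J_*^{3\alpha-1}$ (Shkoller's $W\simeq-\Gamma J^{3\alpha-1}$), so nothing in the class hypotheses yields your stronger bound. The repair is painless, since $\frac{3\alpha+1}{2}\ge1$ iff $\alpha\ge\frac13$. Your Step~3 runs with $3\alpha$ replaced by $\frac{3\alpha+1}{2}$, giving $\nu(t)\ge e^{-\int_0^tB}\bigl(1+\tfrac{3\alpha-1}{2}Ct\bigr)^{-\frac{2}{3\alpha-1}}$, and the exponential bound at $\alpha=\frac13$.

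\textbf{Your driver sketch also puts the hypotheses in the wrong places.}

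\emph{Far field.} In the paper $B(t)$ does not come from here. The region $|x_*-y|\ge L$ is bounded by $\lesssim L^{-5/2}\|u_0\|_{L^2}$: move the curl onto the cut-off kernel and use energy conservation.

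\emph{Near field.} One uses the cancellation $\omega(y)-\omega(x_*)$ and the splitting
\[
F(a,t)\omega_0(a)-F(a_*,t)\omega_0(a_*)=F(a_*,t)\bigl(\omega_0(a)-\omega_0(a_*)\bigr)+\bigl(F(a,t)-F(a_*,t)\bigr)\omega_0(a).
\]
This is done on anisotropic blocks $E_{\lambda_1,\lambda_2,\Lambda}$ adapted to the SVD, where near-field compatibility gives $|\Phi_t(a)-\Phi_t(a_*)|\simeq\nu\Lambda(1+\lambda^2)^{1/2}$. The cusp--tail bound handles the first piece on cone and off-cone blocks. There $\tau_2^{-1-\alpha}=(\kappa\nu)^{\frac{1+\alpha}{2}}$ produces the power above, and this is where $K_T$ enters. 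The averaged Dini condition is literally the bound on the second piece, and it is the source of $B(t)$. Without this splitting the cusp hypothesis on $\omega_0$ cannot control the principal value, since $\omega(\Phi_t(a),t)$ carries $F(a,t)$ and $\omega_0(a_*)\neq0$ off the axis.

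\emph{Rotation of $v(t)$.} No commutator arises from it. The Dini-derivative identity is pointwise in $t$, and $|K_e(h)|\lesssim|h|^{-3}$ holds uniformly in the unit vector $e$.
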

Theorem \ref{Main Result} is a \textbf{matrix-clock criterion}: it rules out collapse of the smallest singular value of $D\Phi_t$, and hence precludes vorticity blow-up through the Cauchy formula.

Our second result can be stated as follows
\begin{theorem}\label{Main Result 2}
    Let $\alpha\in\left[\frac{1}{3},1\right)$, and $u$ be a solution in $ \mathcal{S}^\alpha_{\rm vert}([0,T))$ with initial data $u_0$ in $\mathcal{S}^\alpha_{\rm vert,0}$, where the class of initial data $\mathcal{S}^\alpha_{\rm vert, 0}$ and the class of conditional coherent solutions $\mathcal{S}^\alpha_{\rm vert}$ shall be defined in section \ref{Section 2}. Assume that $T$ is the maximal time of existence of the solution $u\in\mathcal{S}^\alpha_{\rm vert}([0,T))$, and that the $\mathcal{S}^\alpha_{\rm vert}([0,T))$ constants remain finite. Let $a_*=(0,\Theta,Z)$ be an arbitrary axis label, and we consider the reduced meridional Jacobian given by
\begin{align*}
    J_*(t)=\lim_{\substack{R\rightarrow 0}}\det D_{R,Z}(\varphi^r(R,Z,t),\varphi^z(R,Z,t)).
\end{align*} 
If $T<\infty$, then there exists a positive constant $c_T$ independent of $a_*$ such that $J_*(t)\geq c_T$ for every $t\in[0,T)$.
\end{theorem}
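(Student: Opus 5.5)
The plan is to derive, for each axis label $a_*=(0,\Theta,Z)$, the scalar clock inequality announced in the abstract,
\begin{equation*}
\dot J_*(t)\geq -B(t)J_*(t)-C\,J_*(t)^{3\alpha},\qquad t\in[0,T),
\end{equation*}
where $B\in L^1(0,T)$ and $C$ depend only on the $\mathcal{S}^\alpha_{\rm vert}([0,T))$ constants. We then integrate this inequality.

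\textbf{Step 1: the Jacobian evolution.} On the axis we have $\varphi^r(R,Z,t)\approx R\,\partial_R\varphi^r(0,Z,t)$, and odd symmetry gives $u^r=0$ on the axis. Using $\nabla\cdot u=0$ in the form $\partial_r u^r+\frac{u^r}{r}+\partial_z u^z=0$ and letting $r\to0$, we get $\partial_r u^r=-\frac12\partial_z u^z$ on the axis. The time derivative of $\det D_{R,Z}\Phi$ equals $(\partial_ru^r+\partial_zu^z)\circ\Phi$ times the determinant. Passing to the limit $R\to0$ then gives
\begin{equation*}
\dot J_*(t)=\tfrac12 W_*(t)J_*(t),\qquad W_*(t)=\partial_zu^z(t,0,\varphi^z(0,Z,t)).
\end{equation*}
This is exactly Shkoller's clock equation, now at a general axis point.

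\textbf{Step 2: the driver bound.} This is the main obstacle. Write $W_*$ through the Biot--Savart law \eqref{Biot-Savart} applied to $\omega^\theta=r\,\xi$, where $\xi=\omega^\theta/r$ is transported, so that $\xi(t,\Phi(R,Z,t))=\xi_0(R,Z)$. Split the kernel into a near-field part and a far-field part.
\begin{itemize}
\item[(a)] The far field and the smooth remainder are controlled by $\|u_0\|_{L^2}$, conservation of $\|\xi\|_{L^\infty}$, and the near-field compatibility and Dini coherence constants of $\mathcal{S}^\alpha_{\rm vert}$. Together they give a contribution $B(t)$, which we arrange to be integrable on $[0,T)$ using the finiteness of $T$.
\item[(b)] For the singular near-field part, change to Lagrangian variables and invoke Shkoller's kinematic drift law, which is part of the definition of the coherent class. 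It says that particles close to the axis point at time $t$ originate from the cusp sector of Lagrangian polar angle $\lesssim J_*(t)^3$. The cusp-tail hypothesis gives $|\omega_0|\lesssim(\text{angle})^{\alpha}$ there, so this sector contributes at most $C J_*^{3\alpha-1}$ to $|W_*|$.
\end{itemize}
The delicate point is to make the bounded transverse-distortion hypothesis convert the Eulerian kernel $|x-y|^{-3}$ into Lagrangian measure on this sector uniformly in $a_*$. We handle this by comparing $|\Phi(a)-\Phi(b)|$ with $J_*$-weighted label distances, losing only constants. Combining (a) and (b) yields $W_*\geq -2B(t)-2CJ_*^{3\alpha-1}$, and hence the displayed inequality.

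\textbf{Step 3: the ODE argument.} Set $J=J_*$. If $\alpha=\frac13$, the inequality reads $\dot J\geq-(B+C)J$, so Grönwall gives
\begin{equation*}
J(t)\geq J(0)\,e^{-\int_0^T B-CT}.
\end{equation*}
If $\alpha>\frac13$, put $E(t)=e^{\int_0^tB}$ and $K=EJ$. Then $\dot K\geq -CE^{1-3\alpha}K^{3\alpha}$, and $E\geq1$, so $\dot K\geq-CK^{3\alpha}$. Comparison with the ODE $\dot y=-Cy^{3\alpha}$ shows that $K$ can only vanish if $K^{1-3\alpha}$ reaches zero. But since $1-3\alpha<0$, the quantity $K^{1-3\alpha}$ satisfies $\frac{d}{dt}K^{1-3\alpha}\leq C(3\alpha-1)$, so it grows at most linearly. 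Therefore
\begin{equation*}
J(t)\geq e^{-\int_0^TB}\bigl(J(0)^{1-3\alpha}+C(3\alpha-1)T\bigr)^{-1/(3\alpha-1)}.
\end{equation*}

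Finally, $J_*(0)=1$, because $\Phi_0=\mathrm{id}$. Since $B$ and $C$ are uniform in $a_*$, this gives the constant $c_T>0$ claimed in Theorem~\ref{Main Result 2}, independent of $a_*$.
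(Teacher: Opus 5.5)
Your Steps 1 and 3 are sound. Step 1 correctly rederives the clock law $\dot J_*=\frac12 W_*J_*$, which the paper simply quotes from Shkoller. Your integrating-factor argument in Step 3 is equivalent to the paper's substitution $g=f^{1-3\alpha}/(1-3\alpha)$.

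The gap is in Step 2(a). There the near-field labels outside the $J_*^3$ cusp sector (your ``smooth remainder'') are controlled by hypotheses you do not have:
\begin{itemize}
\item The class $\mathcal{S}^\alpha_{\rm vert}$ contains no Dini coherence condition; it is explicitly dropped for axis labels.
\item $\|\xi_0\|_{L^\infty}$ is not finite in general. The cusp/tail bound only gives $|\xi_0|\lesssim R^{\alpha-1}b_\Lambda(a_*)$, which is unbounded at the axis for exactly the cusp data the class is built to contain.
\end{itemize}
In fact, if $\xi_0\in L^\infty$ were available the theorem would be trivial and blind to $\alpha$. Since $\omega^\theta=r\xi$, the integrand of the on-axis formula for $W_*$ is bounded by $|x_*-y|^{-2}|\xi|$, so $|W_*|\lesssim L\|\xi_0\|_{L^\infty}+L^{-5/2}\|u_0\|_{L^2}$ for all $t$.

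Two smaller misattributions:
\begin{itemize}
\item The $J_*^3$ drift law is not part of the definition of the class. It is derived from the componentwise near-field axis condition, $r\simeq J_*^{-1}R$ and $z-z_*\simeq J_*^2(Z-Z_*)$.
\item Bounded transverse distortion is not a hypothesis here. It is automatic, since $\tau_1=\tau_2=J_*^{-1}$ on the axis.
\end{itemize}

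The missing idea is that the cusp/tail estimate must be run on the whole near field. The kernel's decay off the cone then absorbs the growth of $|\omega_0|$ at larger Lagrangian angles. The paper uses the dyadic blocks $E_{\lambda,\Lambda}=\{|\Delta Z|\simeq\Lambda,\ R\simeq\lambda J_*^3\Lambda\}$ for all dyadic $\lambda$; your sector is $\lambda\lesssim 1$. On such a block:
\begin{itemize}
\item The near-field axis condition gives $|x_*-\Phi_t(a)|\simeq J_*^2\Lambda(1+\lambda^2)^{1/2}$, hence the kernel bound $\lambda J_*^{-6}\Lambda^{-3}(1+\lambda^2)^{-5/2}$.
\item The vorticity is $\lesssim J_*^{-1}(\lambda J_*^3\Lambda)^\alpha b_\Lambda(a_*)$. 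The factor $J_*^{-1}$ comes from $\|D_a\Phi_t(a)\|_{op}\simeq J_*^{-1}$. Your $\xi$-transport can legitimately supply it instead, via $\omega^\theta(t,\Phi_t(a))=\frac{\varphi^r}{R}\omega_0^\theta(a)$ and $\varphi^r\simeq J_*^{-1}R$.
\item The Lagrangian volume is $\simeq\lambda^2J_*^6\Lambda^3$.
\end{itemize}
Each block therefore contributes $\lesssim\lambda^{\alpha+3}(1+\lambda^2)^{-5/2}J_*^{3\alpha-1}\Lambda^\alpha b_\Lambda(a_*)$. The sum over $\lambda$ converges at both ends, like $\lambda^{\alpha+3}$ for small $\lambda$ and $\lambda^{\alpha-2}$ for large $\lambda$. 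The sum over $\Lambda$ is exactly the tail condition $\sum_\Lambda\Lambda^\alpha b_\Lambda(a_*)\le\mathcal{T}_\alpha$, which your proposal never invokes. Only the region $|x_*-y|\ge L$ then goes into $B(t)$, via the energy bound. With this replacement for 2(a)--(b), your argument closes.
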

Theorem \ref{Main Result 2} is the on-axis version of Theorem \ref{Main Result}. In this setting, the principal value obstruction disappears due to the fact that the angular vorticity vanishes on the axis and the transverse singular values are balanced by symmetry. It also provides the setting in which Shkoller's fundamental clock-barrier model is the most visible.

Here, the classes of solutions and initial data $ \mathcal{S}^\alpha$should be interpreted as coherent-clock classes and not as replacements for the Lorentz-space global regularity theory. The hypotheses that we shall describe in section \ref{Section 2} encode precisely the geometric properties needed to run the reverse clock argument: a cusp-tail reservoir, Dini coherence to handle principal values, near-field compatibility with the linearized flow geometry, and bounded transverse distortion. Our conclusion is that a supercritical or critical solution satisfying these coherence assumptions cannot blow up through a Shkoller-type clock-collapse mechanism.

\subsection{Historical references}
Prior to Shkoller's work, the known $C^{1,\alpha}$-type  constructions for Euler were Eulerian or boundary-based in nature, and did not rely on a Lagrangian clock-collapse analysis in the axisymmetric no-swirl whole-space setting. All of them hold in settings in which one either allows for rough vorticities, or make use of a boundary geometry.

In a major breakthrough, Elgindi \cite{13} proved that finite-time singularity happens in $C^{1,\alpha}$, where $0<\alpha\ll1$. One crucial aspect is that his theory is perturbative in the H\" older exponent $\alpha$, and as long as this parameter is sufficiently small, the aforementioned pressure and strain coupling effects can be shown to remain small, thus allowing for singularity formation to occur. For other developments in the small $\alpha$ regime, such as stability and geometric refinements, see \cite{14,10,15} and references therein. For other developments on the competition between advection and vortex stretching, see \cite{29}.  In the same small $\alpha$ regime, a recent paper of Córdoba–Martínez-Zoroa–Zheng \cite{10} came up with a multi-region  architecture for solutions with $C^{1,\alpha}\cap L^2$ regularity that are smooth away from a point.

In the case of a physical boundary, Chen-Hou proved finite-time blow-up for axisymmetric Euler and other related models with $C^{1,\alpha}$ velocity in \cite{5}, and have also recently developed a computer-assisted framework in which blow-up occurs for smooth boundary data \cite{6,7}. For a more general overview and further references, we refer the reader to the survey of Drivas-Elgindi \cite{12}, while for the matter of globally self-similar finite-energy  scenarios for 3D Euler, including in the axisymmetric setting, see Constantin-Ignatova-Vicol \cite{9}.

At the endpoint and in related higher-dimensional settings, we also mention the recent work of Shao-Wei-Zhang \cite{Shao-Wei-Zhang}, who prove global regularity for axisymmetric no-swirl Euler solutions under weak critical assumptions, such as $\frac{\omega_0}{r^{d-2}}\in L^{\frac{d}{d-2},\infty}$, which in three dimensions corresponds to the weak $L^3$ endpoint scale.

For the absence of blow-up for axisymmetric $C^{1,\alpha}$ solutions when $\alpha>\frac{1}{3}$, see the rigidity result of Saint-Raymond \cite{24}, as well as the one of Danchin \cite{11}, which also provides the sharpest formulation, in which the specific vorticity $\xi=\frac{\omega^\theta}{r}$ is merely assumed to belong to the $L^{3,1}$ Lorentz space. Bang and Cheskidov have recently proved in \cite{Bang-Cheskidov} that the $L^{3,1}$ space is sharp, in the sense that for every $q>1$, there exists multi-ring data $\omega_0\in L^\infty$ with $\xi\in L^{3,q}$ for which the $L^\infty$ the vorticity exhibits norm inflation. 

While our results do not improve upon the classes of admissible solutions or initial data from \cite{11} or \cite{Shao-Wei-Zhang}, they instead show that Shkoller's clock and driver mechanism, which caused blow-up for $\alpha<\frac{1}{3}$, is also the one preventing it from happening when $\alpha\geq\frac{1}{3}$.

\subsection{A brief outline of the paper}
In section \ref{Section 2}, we introduce our classes of admissible solutions and initial data, for both general and on-axis drivers. These are spaces with some natural geometrical properties, which we also describe and motivate there. The reason why we also need general drivers is because our aim in Theorem \ref{Main Result} is to prove the absence of singularity formation, which means that we must show that we can control any general drivers, not only the on-axis ones.

In section \ref{Section 3}, we provide the proof of Theorem \ref{Main Result}. The idea here is to work in a matrix clock setting, which is a generalization of Shkoller's on-axis driver one, and to study the smallest singular value of the deformation gradient $D\Phi_t$, for which we also study the time evolution. In particular, this equation also motivates our choice for the driver. We then carry out the proof of Theorem \ref{Main Result} by assuming a suitable driver bound, whose proof is postponed until section \ref{Section 4}.

In section \ref{Section 4}, we provide the proof of the general driver estimate. We split our analysis into far-field and near-field labels. For the former, we close the estimate by a localized energy argument, whereas the analysis for near-field estimates is more subtle, given that in particular, we must make sure to avoid principal value singularities. Here we use a singular value decomposition for the deformation gradient $D\Phi_t$,  further split the space into cone and off-cone dyadic regions, and carry out our estimate in each of them.

In section \ref{Section 5}, we provide the proof of Theorem \ref{Main Result 2}. For this purpose, we restrict our attention to on-axis drivers. This is also the place where Shkoller's clock and driver mechanism is the most visible, as our previous analysis for general drivers can be carried out with significant simplifications. For example, the symmetry assumptions ensure that $\omega^\theta$ vanishes on the axis, so we do not have to worry about potential singularities in the principal-value integrals.

\section{The class of admissible data and solutions
}\label{Section 2}
As we have already seen, our initial data satisfies the condition $u_0\in C^{1,\alpha}\cap L^2$, is axisymmetric, and odd in $z$. In what follows, we shall describe the classes of admissible initial $\mathcal{S}^\alpha_{\rm general,0}$ and $\mathcal{S}^\alpha_{\rm vert,0}$, as well as the conditional coherent classes of solutions $\mathcal{S}^\alpha_{\rm general}$ and $\mathcal{S}^\alpha_{\rm vert}$.
\subsection{General labels} 

We shall now describe the class of initial data $\mathcal{S}^\alpha_{\rm general,0}$ and the one of coherent conditional solutions $\mathcal{S}^\alpha_{\rm general}$. We shall first define the former.

Let $a_*$ be an arbitrary label. We also consider $\eta=(\eta_1,
\eta_2,\eta_3)$ and $\displaystyle \eta_\perp=(\eta_1,\eta_2)$.

Let $\omega_0=\nabla\times u_0$. For dyadic $\rho,\Lambda$, we assume that for every orthonormal frame $V$, there exists a constant $b_\Lambda(V,a_*)$ such that the following \textbf{cusp/tail estimate} holds
\begin{align*}
    |\omega_0(a_*+V\eta)-\omega_0(a_*)|\lesssim M_\alpha |\rho|^\alpha b_\Lambda(V,a_*)
\end{align*}
in the region $|\eta_\perp|\simeq\rho, |\eta_3|\simeq\Lambda$, where $b_\Lambda(V,a_*)$ are constants so that
\begin{align*}
\sup_{\substack{V \text{orthonormal frame}\\
a_*}}\sum_{\Lambda}b_\Lambda(V,a_*)\Lambda^\alpha\leq \mathcal{T}_\alpha<\infty.
\end{align*}
We shall denote the set of initial data satisfying these conditions by $\mathcal{S}^\alpha_{\rm general, 0}$.

These estimates are also intrinsically satisfied by the blow-up profile and by the admissible perturbations constructed by Shkoller in \cite{Shkoller}. They also provide us with a reservoir-type bound, substituting the control provided by the Lorentz norm $L^{3,1}$ present in the work of Danchin \cite{11}.

We now describe the dynamical conditions which characterize the class of conditional coherent solutions $\mathcal{S}^\alpha_{\rm general}$. We must mention that these conditions are merely assumed to be true as part of the definition of our class of admissible solutions, and we do not pursue the proof of their propagation in this paper.

Let $u$ be an axisymmetric solution with no-swirl on an interval $[0,T)$, with initial data $u_0\in \mathcal{S}^\alpha_{\rm general,0}$. We fix $t\in[0,T)$, and let $\tau_1$, $\tau_2$, and $\nu$ be the singular values of $D_a\Phi_t(a_*)$. Let us define $\displaystyle \Lambda:=|\eta_3|$, as well as the matrix angular parameters
\begin{align*}
    \lambda_1&:=\frac{\tau_1|\eta_1|}{\nu\Lambda}\\
    \lambda_2&:=\frac{\tau_2|\eta_2|}{\nu\Lambda}\\
    \lambda:&=\sqrt{\lambda_1^2+\lambda_2^2}.
\end{align*}

We now introduce the dyadic region $E_{\lambda_1,\lambda_2,\Lambda}(a_*,t)$ defined by
\begin{align*}
    E_{\lambda_1,\lambda_2,\Lambda}(a_*,t):=\left\{|\eta_3|\simeq\Lambda,|\eta_1|\simeq\frac{\lambda_1\nu\Lambda}{\tau_1},|\eta_2|\sim\frac{\lambda_2\nu\Lambda}{\tau_2}\right\}.
\end{align*}

Given that we are working with generic labels $a_*$, for which the vorticity $\omega_0(a_*)$ does not necessarily vanish, in order to control the principal value integral in the expression of our driver, we shall also impose the \textbf{averaged Dini coherence condition}
\begin{align*}
    \sup_{a_*}\sum_{\lambda_1,\lambda_2,\Lambda}
\int_{E_{\lambda_1,\lambda_2,\Lambda}(a_*,t)}\frac{
\left|
\bigl[F(a,t)-F(a_*,t)\bigr]\omega_0(a)
\right|
}{
\left|\Phi_t(a)-\Phi_t(a_*)\right|^3
}\,da
\le
B(t),
\end{align*}
where
\begin{align*}
  \int_0^TB(t)\,dt<\infty.
\end{align*}
This condition resolves the principal value obstruction in the Biot-Savart Law for the strain matrix, relating the velocity to vorticity, since for general labels $a_*$, we cannot assume that $\omega_0(a_*)=0$, unlike for on-axis ones.

We shall also impose the following \textbf{near-field label condition}: There exists $L>0$ such that whenever $|\Phi_t(a)-\Phi_t(a_*)|\leq L$, there exist positive constants $c_1$ and $c_2$ independent of $t\in[0,T)$, $a$, and $a_*$ such that
\begin{align*}
    c_1|D_a\Phi_t(a_*)(a-a_*)|\leq|\Phi_t(a)-\Phi_t(a_*)|\leq c_2|D_a\Phi_t(a_*)(a-a_*)|.  
\end{align*}
This condition will ensure that the singular-value cone geometry describes the flow map.

Lastly, let us define $\kappa(t):=\frac{\tau_1(t)}{\tau_2(t)}$, and we shall impose the \textbf{bounded transverse distortion condition}, in the sense that
\begin{align*}
  \sup_{a_*}\sup_{t\in[0,T)}\kappa(t):=K_T<\infty. 
\end{align*}

This condition prevents an aspect-ratio loss.

We shall denote by $\mathcal{S}^\alpha_{\rm general}([0,T))$ the set of solutions for which the initial data $u_0$ is in $\mathcal{S}^\alpha_{\rm general,0}$, and they further satisfy the aforementioned conditions. If, in addition, the tail constant $\mathcal{T}_\alpha$, $K_T$, the near-field constants $c_1$, $c_2$, $L$, and $\|B(t)\|_{L^1([0,T))}$ are uniformly bounded in $t\in[0,T)$ and independent of the label $a_*$, we shall also say that the $\mathcal{S}^\alpha_{\rm general}([0,T))$ constants remain finite. All of these conditions should be interpreted as coherence hypotheses for a putative matrix-clock collapse.

\subsection{Axis labels}
We shall now describe the class of initial data $\mathcal{S}^\alpha_{\rm vert,0}$ and the one of coherent conditional solutions $\mathcal{S}^\alpha_{\rm vert}$. We shall first define the former.

Let $a_*$ be an on-axis label. For an arbitrary label $a$, we write $\displaystyle a=a_*+\eta$, where $\eta=(\eta_1,
\eta_2,\eta_3)$. Let us once again define $\eta_\perp=(\eta_1,\eta_2)$.

We assume that there exists a positive constant $C$ independent of $a_*$, such that whenever $|\eta_3|\simeq\Lambda$ and $|\eta_\perp|\simeq\rho$, the following \textbf{cusp/tail estimate} holds
\begin{align*}
    |\omega_0(a)|=|\omega_0(a)-\omega_0(a_*)|&\lesssim C\rho^{\alpha}b_\Lambda(a_*),
\end{align*}
where $b_\Lambda(a_*)$ is a tail envelope satisfying
\begin{align*}
  \sup_{a_*}\sum_{\Lambda}\Lambda^\alpha b_\Lambda(a_*)< \mathcal{T}_\alpha<\infty. 
\end{align*}

We shall denote by $\mathcal{S}^\alpha_{\rm vert,0}$ the set of initial data satisfying the properties described in this subsection, which are also axisymmetric, odd in $z$, and belong to $C^{1,\alpha}\cap L^2$.

We now describe the dynamical conditions which characterize the class of conditional coherent solutions $\mathcal{S}^\alpha_{\rm vert}$. Just like in the case of $\mathcal{S}^\alpha_{\rm general}$, we merely assume these properties to be hold as part of the definition of our class of admissible solutions, and we do not pursue the proof of their propagation in this paper.

In this case, the symmetry assumptions readily imply that $\omega_0(a_*)=0$, and for this reason, we no longer impose the Dini coherence condition.
As we shall also see in the proof of Theorem \ref{Main Result 2}, the two largest singular values $\tau_1$ and $\tau_2$ of $D_a\Phi_t(a_*)$ will be equal, so the bounded transverse distortion condition will automatically hold for on-axis labels.

Let $u$ be an axisymmetric solution with no swirl on an interval $[0,T)$ whose initial data belongs to $\mathcal{S}^\alpha_{\rm vert, 0}$.

We impose the following \textbf{near-field axis label condition}: there exists $L>0$ such that whenever $|\Phi_t(a)-\Phi_t(a_*)|\leq L$, 
\begin{align*}
   \Phi_t(a)-\Phi_t(a_*)\simeq D_a\Phi_t(a_*)(a-a_*),  
\end{align*}
and $\|D_a\Phi_t(a)\|_{op}\simeq\|D_a\Phi_t(a_*)\|_{op}$, with implicit constants independent of $t\in[0,T)$, $a$, and $a_*$.

Here, for two vectors $u=(u_1,u_2,u_3)$ and $v=(v_1,v_2,v_3)$, we say that $u\simeq v$ if there exist positive constants $c_1$ and $c_2$ such that for every index $l=1,2,3$,  $\displaystyle c_1|v_l|\leq |u_l|\leq c_2|v_l|$.

We shall denote this class of solutions by $\mathcal{S}^\alpha_{\rm vert}([0,T))$.

We shall also say that the $\mathcal{S}^\alpha_{\rm vert}([0,T))$ constants remain finite if the tail constant $\mathcal{T}_\alpha$, $L$ and the implicit near-field constants are uniform with respect to $t\in[0,T)$ and to the on-axis label $a_*$.

\section{The proof of Theorem \ref{Main Result}}\label{Section 3}
 Let $a_*=(R,\Theta,Z)$ be a generic label and $x_*(t)=\Phi_t(a_*)$ be the corresponding (potentially collapsing) trajectory.
Let
\begin{align*}
    F_*(t):&=D_a\Phi_t(a_*)
\end{align*}
be the deformation matrix.
Given that we are in the axisymmetric with no swirl setting, we may write the flow as
\begin{align*}
 \Phi_t(a_*)=(\varphi^r(R,Z,t)\cos\Theta,\varphi^r(R,Z,t)\sin\Theta,\varphi^z(R,Z,t)),   \end{align*}
 hence as in \cite{Shkoller}, the deformation matrix can be seen to be given by
 \begin{align*}
  F_*(t):&=\begin{pmatrix}
\partial_R\varphi^r & 0 & \partial_Z\varphi^r\\
0 & \frac{\varphi^r}{R} & 0\\
\partial_R\varphi^z & 0 & \partial_Z\varphi^z
      \end{pmatrix}   
 \end{align*}
  The middle entry $\displaystyle \lambda_\theta=\frac{\varphi^r}{R}$ is called the \textbf{azimuthal/hoop stretch}, in the sense that it essentially maps a circle of radius $R$ to one of radius $\varphi^r$.
 
We consider the singular-value decomposition of $F_*(t)$, which is given by
\begin{align*}
    F_*(t)&=U\begin{pmatrix}\tau_1 & 0 &0\\0 & \tau_2 & 0\\
    0 & 0& \nu\end{pmatrix}V^T,
\end{align*}
where $\tau_1\geq\tau_2\geq\nu>0$ are the singular values of $F_*$.

From the incompressibility condition, we also know that $\tau_1\tau_2\nu=1$. 

For our potentially collapsing matrix clock, we shall choose $\nu(t)=\sigma_{min}(D_a\Phi_t(a_*))$. Let $E_{\nu(t)}$ be the  left singular subspace associated to $\nu(t)$. 

We have the following
\begin{lemma}\label{Driver ODE}
In the previous notations, we have the relation
\begin{align*}
    \frac{d^+}{dt}\nu(t)|_{t=t_0}&=\nu(t_0)\inf_{\substack{e\in E_{\nu(t_0)},|e|=1}}e\cdot S(x_*(t_0),t_0)e,
\end{align*}
where $\displaystyle S=\frac{1}{2}(\nabla u+\nabla u^T)$ is the strain matrix and $d^+$ is the right Dini derivative.
\end{lemma}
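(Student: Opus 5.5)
Writing $F(t)=D_a\Phi_t(a_*)$ for brevity, we start from the Lagrangian evolution of the deformation gradient. Differentiating the flow equation in the label gives
\begin{align*}
\frac{d}{dt}F(t)=\nabla u(x_*(t),t)\,F(t).
\end{align*}
We do not work with $\nu$ directly. Instead we use the variational characterization of the smallest singular value through the left factor. Since $F$ is invertible (its determinant is $1$), we have $\nu(t)^{-2}=\lambda_{\max}\big((FF^T)^{-1}\big)$, or equivalently
\begin{align*}
\nu(t)=\min_{|e|=1}|F(t)^Te|,
\end{align*}
and the minimizers are exactly the unit vectors of the left singular subspace $E_{\nu(t)}$. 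The reason for passing to $F^T$ acting on left vectors is that the evolution then involves $\nabla u^T$, which is what produces the strain.

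The first step is to differentiate $|F^Te|^2$ for a fixed unit vector $e$. We get
\begin{align*}
\frac{d}{dt}|F^Te|^2=2\,(F^Te)\cdot(F^T\nabla u^T e)=2\,e\cdot FF^T\nabla u^Te.
\end{align*}
For $e\in E_{\nu(t_0)}$ we have $FF^Te=\nu^2e$ at $t=t_0$. Hence the derivative equals $2\nu^2\,e\cdot\nabla u^Te=2\nu^2\,e\cdot Se$, because the antisymmetric part of $\nabla u$ contributes nothing to the quadratic form. Writing $g_e(t)=|F(t)^Te|$, this gives $g_e'(t_0)=\nu(t_0)\,e\cdot S(x_*(t_0),t_0)e$.

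The second step is a Danskin-type argument for the minimum $\nu(t)=\min_{|e|=1}g_e(t)$. The family $g_e$ is $C^1$ in $t$, uniformly in $e$ on the compact unit sphere, because $u\in C^{1,\alpha}$ makes $\nabla u$ continuous along the trajectory on $[0,T)$. The inequality $\nu\le g_e$ together with $\nu(t_0)=g_e(t_0)$ for every $e\in E_{\nu(t_0)}$ gives
\begin{align*}
\limsup_{h\downarrow0}\frac{\nu(t_0+h)-\nu(t_0)}{h}\le\inf_{e\in E_{\nu(t_0)},\,|e|=1}g_e'(t_0).
\end{align*}
For the reverse inequality, take $h_n\downarrow0$ and minimizers $e_n$ at time $t_0+h_n$. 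By compactness, a subsequence converges to some $e\in E_{\nu(t_0)}$, since the minimizing set is upper semicontinuous. Then
\begin{align*}
\nu(t_0+h_n)-\nu(t_0)\ge g_{e_n}(t_0+h_n)-g_{e_n}(t_0)=h_n\,g_{e_n}'(t_0)+o(h_n),
\end{align*}
and $g_{e_n}'(t_0)\to g_e'(t_0)\ge\inf_{E_{\nu(t_0)}}g'$. So the $\liminf$ is bounded below by the same infimum, the right Dini derivative exists, and it equals $\nu(t_0)\inf_{e}e\cdot Se$. This is the claimed formula.

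The main obstacle is this second step when $\nu$ is a repeated singular value, for instance when $\tau_2=\nu$. In that case $\nu$ is only Lipschitz, not differentiable, and only the one-sided derivative with an infimum over the whole eigenspace is valid. The delicate point is the lower bound. It requires $g_{e_n}'(t_0)\to g_e'(t_0)$ along near-minimizers, which needs the uniform-in-$e$ $C^1$ control of $t\mapsto F(t)$ and the cancellation $FF^Te=\nu^2e$ only in the limit. I would handle this by writing $g_e'(t_0)=(F^Te)\cdot(F^T\nabla u^Te)/|F^Te|$, which is jointly continuous in $e$ at $t_0$, and then passing to the limit directly.
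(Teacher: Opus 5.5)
Your proof is correct, but it takes a different route from the paper's. You treat $\nu(t)=\min_{|e|=1}|F(t)^Te|$ as the minimum of a compact family of functions and run a Danskin-type envelope argument. The upper bound comes from testing with $e\in E_{\nu(t_0)}$. The lower bound comes from taking minimizers $e_n$ at time $t_0+h_n$ and extracting a subsequence converging into $E_{\nu(t_0)}$. You then use the general formula $g_e'(t_0)=(F^Te)\cdot(F^T\nabla u^Te)/|F^Te|$, which is continuous in $e$, so the identity $g_e'(t_0)=\nu(t_0)\,e\cdot Se$ is only needed on the limiting eigenspace.

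The paper instead works with $\lambda=\nu^2$, the smallest eigenvalue of $C=FF^T$, and proves $\frac{d^+}{dt}\lambda=\min_{e\in E}e\cdot\dot Ce$ by a direct perturbation argument. Its upper bound is the same test-vector step as yours. Its lower bound decomposes a unit vector as $v=v_E+v_\perp$ and bounds $v\cdot\dot C(t_0)v\ge\tau-4\|\dot C(t_0)\|_{op}|v_\perp|$. This cross term is absorbed into the spectral-gap term $(\gamma-\lambda(t_0))|v_\perp|^2$ by completing the square, with a separate case when $E_{\nu(t_0)}=\mathbb{R}^3$ and there is no gap. The paper then passes to $\nu$ by the chain rule and evaluates $e\cdot\dot Ce=2\nu^2e\cdot Se$ on $E$.

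Your route is softer and more general: it needs no spectral gap and no case split, and it works directly with $\nu$. The paper's argument, at the cost of the case split, makes the role of the gap explicit: the cross term costs $O\bigl(h^2/(\gamma-\lambda(t_0))\bigr)$. Your assumption that $g_e$ is $C^1$ uniformly in $e$ is more than you need. The uniform expansion $g_e(t_0+h)-g_e(t_0)=h\,g_e'(t_0)+o(h)$ already follows from $F(t_0+h)=F(t_0)+h\dot F(t_0)+o(h)$ in operator norm together with $|F(t_0)^Te|\ge\nu(t_0)>0$. That is exactly the differentiability of $C$ the paper uses.
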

\begin{proof}
  Let $F(t):=D_a\Phi_t(a)$ along the trajectory $x(t)=\Phi_t(a)$. We have $\dot{F}(t)=\nabla u(x(t),t)F(t)$.

  We consider the left singular subspace $E_{\nu(t)}$ associated to $\nu(t)$, which can be viewed as the eigenspace of $C(t):=F(t)F(t)^T$ corresponding to the eigenvalue $(\nu(t))^2$.

It is also clear that $\lambda(t):=(\nu(t))^2$ is the smallest eigenvalue of the positive definite matrix $C(t)$. Let $L(t):=\nabla u(x(t),t)$

The time derivative of $C(t)$ is given by the relation
\begin{align*}
 \dot{C(t)}&= LFF^T+FF^TL^T=LC+CL^T
\end{align*}

  We shall also need the following standard formula for the smallest eigenvalue of the symmetric matrix $C(t)$:
  \begin{align*}
      \frac{d^+}{dt}\lambda(t)_{|t=t_0}=\min_{\substack{e\in E_{\nu(t_0)}\\|e|=1}}e\cdot \dot{C}(t_0)e.
  \end{align*}
  We note that the minimum is well-defined, since it is taken over a compact set.
  
  For the sake of completeness, we shall first prove this formula.

  We know that $C(t)$ is differentiable. Then,
  \begin{align*}
      C(t_0+h)&=C(t_0)+h\dot{C}(t_0)+o(h)
  \end{align*}
  in the sense of operator norms as $h\rightarrow 0$.

  Let $\displaystyle \tau:=\min_{\substack{e\in E_{\nu(t_0)}\\|e|=1}}e\cdot \dot{C}(t_0)e$.

  We first prove the lower bound
  \begin{align*}
      \liminf_{\substack{h\rightarrow 0^+}}\frac{\lambda(t_0+h)-\lambda(t_0)}{h}\geq \tau.
  \end{align*}

  Let $v$ be an arbitrary unit vector, which we shall decompose as $v=v_E+v_\perp$, where $v_E\in E_{\nu(t_0)}$, and $v_\perp\perp E_{\nu(t_0)}$.

  We first analyze the case $E_{\nu(t_0)}\neq\mathbb{R}^3$, and let $\gamma>\lambda(t_0)$ be the next eigenvalue of $C(t_0)$. In this case, it is clear that
  \begin{align*}
      v\cdot C(t_0)v\geq \lambda(t_0)+(\gamma-\lambda(t_0))|v_\perp|^2
  \end{align*}

  We also have the inequality
  \begin{align*}
      v\cdot \dot{C}(t_0)v=v_E\cdot \dot{C}(t_0)v_E+2v_\perp\cdot \dot{C}(t_0)v_E+v_\perp\cdot \dot{C}(t_0)v_\perp,
  \end{align*}
  Let $x:=|v_E|$, with $0\leq x\leq 1$, so that $|v_\perp|=\sqrt{1-x^2}$.
It then follows that
\begin{align*}
      v\cdot \dot{C}(t_0)v&=v_E\cdot \dot{C}(t_0)v_E+2v_\perp\cdot \dot{C}(t_0)v_E+v_\perp\cdot \dot{C}(t_0)v_\perp\\
      &\geq x^2\tau -2\|\dot{C}(t_0)\|_{op}x\sqrt{1-x^2}-\|\dot{C}(t_0)\|_{op}(1-x^2)\\
      &=\tau-2\|\dot{C}(t_0)\|_{op}x\sqrt{1-x^2}-(\tau+\|\dot{C}(t_0)\|_{op})(1-x^2)\\
      &\geq \tau-4\|\dot{C}(t_0)\|_{op}\sqrt{1-x^2}=\tau-4\|\dot{C}(t_0)\|_{op}|v_\perp|
  \end{align*}
It follows that
\begin{align*}
    v\cdot C(t_0+h)v&\geq \lambda(t_0)+(\gamma-\lambda)|v_\perp|^2+h\tau-4\|\dot{C}(t_0)\|_{op}h|v_\perp|+o(h)\\
    &\geq\lambda(t_0)+h\tau-\left(\frac{2\|\dot{C}(t_0)\|_{op}|v_\perp|}{\sqrt{\gamma-\lambda(t_0)}}\right)^2h^2+o(h)\\
    &=\lambda(t_0)+h\tau+o(h)
\end{align*}
By taking the infimum over $v$ with $|v|=1$, we get that
\begin{align*}
    \lambda(t_0+h)\geq\lambda(t_0)+h\tau+o(h),
\end{align*}
which immediately shows that
\begin{align*}
      \liminf_{\substack{h\rightarrow 0^+}}\frac{\lambda(t_0+h)-\lambda(t_0)}{h}\geq \tau.
  \end{align*}

  We now analyze the case $E_{\nu(t_0)}=\mathbb{R}^3$. This time, $C(t_0)=\lambda(t_0)Id$, hence
  \begin{align*}
      v\cdot C(t_0)v=\lambda(t_0).
  \end{align*}

  By definition, it is also clear that

  \begin{align*}
      v\cdot \dot{C}(t_0)v\geq\tau,
  \end{align*}
  hence
  \begin{align*}
      v\cdot C(t_0+h)v\geq \lambda(t_0)+h\tau+o(h).
  \end{align*}
  By taking the infimum over $v$ with $|v|=1$, we get that
\begin{align*}
    \lambda(t_0+h)\geq\lambda(t_0)+h\tau+o(h),
\end{align*}
which is the same inequality from above, and once again leads directly to
\begin{align*}
      \liminf_{\substack{h\rightarrow 0^+}}\frac{\lambda(t_0+h)-\lambda(t_0)}{h}\geq \tau.
  \end{align*}
  We now prove the upper bound
  \begin{align*}
      \limsup_{\substack{h\rightarrow 0^+}}\frac{\lambda(t_0+h)-\lambda(t_0)}{h}\leq \tau.
  \end{align*}
Let $e_0\in E_{\nu(t_0)}$ be a vector attaining the minimum. Then,
\begin{align*}
    \lambda(t_0+h)\leq e_0\cdot C(t_0+h)e_0=\lambda(t_0)+h\tau+o(h)
\end{align*}
The desired bound
\begin{align*}
      \limsup_{\substack{h\rightarrow 0^+}}\frac{\lambda(t_0+h)-\lambda(t_0)}{h}\leq \tau
  \end{align*}
  now immediately follows.

We now recall that $\lambda(t)=(\nu(t))^2$, which means that
\begin{align*}
    \frac{d^+}{dt}_{|t=t_0}\lambda(t)=2\nu(t_0)\frac{d^+}{dt}_{|t=t_0}\nu(t)
\end{align*}

For $e\in E_{\nu(t_0)}$, we also have that
\begin{align*}
  e\cdot \dot{C}(t_0)e=e\cdot LCe+e\cdot CL^T=(\nu(t_0))^2 e\cdot Le+(\nu(t_0))^2 e\cdot L^Te=2(\nu(t_0))^2 e\cdot S(t_0)e  
\end{align*}
 We now deduce that
 \begin{align*}
    2\nu(t_0)\frac{d^+}{dt}_{|t=t_0}\nu(t)=2(\nu(t_0))^2 \min_{\substack{e\in E_{\nu(t_0)}\\|e|=1}}e\cdot S(t_0)e.
\end{align*}
Upon dividing by $\nu(t_0)>0$, we obtain the desired conclusion.
\end{proof}
Let us fix $e\in E_{\nu(t)}$. Let us now define the driver $\mathcal{D}_*(t):=-e\cdot S(x_*(t),t)e$.

We then have the following
\begin{proposition}\label{Driver Estimate}
Let $u\in\mathcal{S}_{\rm general}^\alpha([0,T_*))$ satisfy the hypotheses of Theorem \ref{Main Result}. If $T_*$ is finite, then there exist a positive function $B\in L^1([0,T_*))$ and a positive finite constant $C_{T_*}$, independent of $a_*$ and $e$ such that 
\begin{align*}
    |\mathcal{D}_*(t)|&\leq B(t)+C_{T_*}\,\nu(t)^{\frac{3\alpha-1}{2}},
\end{align*}
for every $t\in[0,T_*)$.
\end{proposition}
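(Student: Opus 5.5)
We represent the strain at $x_*(t)$ through the Biot--Savart law and pull it back to Lagrangian labels. Differentiating \eqref{Biot-Savart} gives the strain as a Calderón--Zygmund principal value integral. Since $\omega(\Phi_t(a),t)=F(a,t)\omega_0(a)$ by the Cauchy formula and $\Phi_t$ preserves volume, we may write
\begin{align*}
\mathcal{D}_*(t)=\mathrm{p.v.}\int_{\mathbb{R}^3} e\cdot K\bigl(\Phi_t(a)-\Phi_t(a_*)\bigr)\bigl[F(a,t)\omega_0(a)\bigr]e\,da,
\end{align*}
where $K$ is homogeneous of degree $-3$ with mean zero on spheres. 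We split the integrand as
\begin{align*}
F(a,t)\omega_0(a)=\bigl[F(a,t)-F(a_*,t)\bigr]\omega_0(a)+F(a_*,t)\bigl[\omega_0(a)-\omega_0(a_*)\bigr]+F(a_*,t)\omega_0(a_*).
\end{align*}
The last term is constant in $a$, so its contribution vanishes in the principal value by the cancellation of $K$. We then split the labels into a far field $|\Phi_t(a)-\Phi_t(a_*)|>L$ and a near field $|\Phi_t(a)-\Phi_t(a_*)|\le L$.

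\textbf{Far field.} Here $|K|\lesssim L^{-3}$. We integrate by parts once to put the kernel on the velocity, giving a kernel of size $L^{-4}$. Cauchy--Schwarz together with energy conservation $\|u(t)\|_{L^2}=\|u_0\|_{L^2}$ then bounds this piece by a constant depending only on $L$ and $\|u_0\|_{L^2}$. That constant is absorbed into $C_{T_*}\nu^{\frac{3\alpha-1}{2}}$ because $\nu\le 1$ and $3\alpha-1\ge0$. The check $\nu\le1$ follows from $\tau_1\tau_2\nu=1$ and $\nu$ being the smallest singular value.

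\textbf{Near field, first term.} On the near field we cover the labels by the dyadic regions $E_{\lambda_1,\lambda_2,\Lambda}(a_*,t)$. The term $[F(a,t)-F(a_*,t)]\omega_0(a)$ is controlled directly by the averaged Dini coherence condition, using $|K(y)|\lesssim|y|^{-3}$. This produces the $B(t)$ contribution, which is integrable in time by assumption.

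\textbf{Near field, second term.} This term carries the clock mechanism. Write $a-a_*=V\eta$ in the right singular frame. The near-field label condition gives
\begin{align*}
|\Phi_t(a)-\Phi_t(a_*)|\simeq |F_*V\eta|=\bigl(\tau_1^2\eta_1^2+\tau_2^2\eta_2^2+\nu^2\eta_3^2\bigr)^{1/2}\simeq \nu\Lambda(1+\lambda)
\end{align*}
on $E_{\lambda_1,\lambda_2,\Lambda}$. Its volume is $\simeq \lambda_1\lambda_2\nu^2\Lambda^3/(\tau_1\tau_2)=\lambda_1\lambda_2\nu^3\Lambda^3$.

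We also need $|e\cdot F_*\,\cdot\,e|$ applied to the vorticity. Since $e$ is the left singular vector for $\nu$, we have $F_*^Te=\nu\, Ve_3$. Combined with $|F_*|\le\tau_1\lesssim K_T^{1/2}\nu^{-1/2}$, this is where bounded transverse distortion enters, since $\tau_1\simeq\tau_2\simeq\nu^{-1/2}$.

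Next apply the cusp/tail estimate with $\rho=|\eta_\perp|\simeq \nu\Lambda\lambda/\tau_2\lesssim \nu^{3/2}\Lambda\lambda$. The integrand on each region is then bounded by
\begin{align*}
M_\alpha\,\nu^{-1/2}(\nu^{3/2}\Lambda\lambda)^\alpha b_\Lambda\cdot\frac{\lambda_1\lambda_2\nu^3\Lambda^3}{\nu^3\Lambda^3(1+\lambda)^3}
=M_\alpha\,\nu^{\frac{3\alpha-1}{2}}\,\Lambda^\alpha b_\Lambda\,\frac{\lambda^\alpha\lambda_1\lambda_2}{(1+\lambda)^3}.
\end{align*}
Summing over dyadic $\lambda_1,\lambda_2$ converges, because $\alpha<1$ gives decay $\lambda^{\alpha-1}$ at large $\lambda$ and small $\lambda$ is harmless. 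Summing over $\Lambda$ is controlled by $\mathcal{T}_\alpha$. This yields $C_{T_*}\nu^{\frac{3\alpha-1}{2}}$, uniformly in $a_*$ and $e$.

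\textbf{Main obstacle.} The most delicate step is this last one: the bookkeeping of exponents of $\nu$. One must verify that the factor $|F_*|\sim\nu^{-1/2}$ acting on the vorticity is exactly compensated by the depletion $\rho^\alpha\sim\nu^{3\alpha/2}$, so that the net power is $\frac{3\alpha-1}{2}$. One must also make sure the principal-value cancellation is used only on the constant term, so that no stray logarithm in $\nu$ appears.
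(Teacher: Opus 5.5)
Your decomposition and exponent bookkeeping follow the paper's proof. You use the Cauchy formula and volume preservation, and you split $\omega(\Phi_t(a),t)-\omega(x_*,t)=[F(a,t)-F(a_*,t)]\omega_0(a)+F(a_*,t)[\omega_0(a)-\omega_0(a_*)]$. The first piece is handled by the averaged Dini condition. For the second piece you apply the cusp/tail estimate on $E_{\lambda_1,\lambda_2,\Lambda}$ with $\|F_*\|_{op}\le\tau_1=\kappa^{1/2}\nu^{-1/2}$, $\rho\lesssim\lambda\nu\Lambda/\tau_2$ and volume $\lambda_1\lambda_2\nu^3\Lambda^3$, which gives $\nu^{\frac{3\alpha-1}{2}}\Lambda^\alpha b_\Lambda$. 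Merging the paper's cone and off-cone cases is harmless, since the computation is identical. The identity $F_*^Te=\nu Ve_3$ plays no role, because $e$ is contracted with the kernel, not with $F_*$.

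However, one step fails: absorbing the far-field constant $C_{\rm far}\lesssim L^{-5/2}\|u_0\|_{L^2}$ into $C_{T_*}\nu^{\frac{3\alpha-1}{2}}$. From $\nu\le1$ and $3\alpha-1\ge0$ you only get $\nu^{\frac{3\alpha-1}{2}}\le 1$, which is the wrong direction. For $\alpha>\frac13$ this factor tends to $0$ exactly in the collapsing regime $\nu\to0$ that the proposition is meant to handle. So no fixed positive constant is bounded by $C\nu^{\frac{3\alpha-1}{2}}$; the absorption works only at $\alpha=\frac13$.

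The repair, which is what the paper does, is to absorb the far-field constant into $B(t)$ instead. The function $B+C_{\rm far}$ is still in $L^1([0,T_*))$ precisely because $T_*<\infty$. That is what the hypothesis ``$T_*$ finite'' in the statement is for, and your argument never used it.

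There is also a smaller technical point in the same step. With the sharp split $\{|x_*-y|>L\}$, integrating by parts leaves a boundary term on the sphere $|x_*-y|=L$ involving the trace of $u$. Energy conservation does not control that term uniformly in $t$. The paper uses a smooth radial cutoff $\chi_L$ instead; it is radial so that $(1-\chi_L)K_e$ keeps its mean-zero property on spheres in the near part. Then $|\nabla_y\times(\chi_LK_e)|\lesssim|x_*-y|^{-4}+L^{-1}|x_*-y|^{-3}\mathbf{1}_{L/2\le|x_*-y|\le L}$ is in $L^2$. Cauchy--Schwarz with $\|u(t)\|_{L^2}=\|u_0\|_{L^2}$ then gives the bound $L^{-5/2}\|u_0\|_{L^2}$.
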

\begin{remark}
    We shall actually only need an upper bound for the driver $\mathcal{D}_*(t)$, so the result that we prove is actually stronger than what is required for the proof of Theorem \ref{Main Result}.
\end{remark}
For the rest of this section, we shall assume that Proposition \ref{Driver Estimate} is true, and we shall show how it implies Theorem \ref{Main Result}.

Let $T_*>0$ be the maximal time of existence for a solution satisfying the hypotheses of Theorem \ref{Main Result}. We assume for the sake of contradiction that $T_*<\infty$. 

We recall that we have chosen $a_*$ to be an arbitrary label. By Lemma \ref{Driver ODE}, we have
\begin{align*}
    \frac{d^+}{dt}\nu(t)&=\nu(t)\inf_{\substack{e\in E_{\nu(t)},|e|=1}}e\cdot S(x_*(t),t)e\geq -B(t)\nu(t) -C_{T_*}\,\nu(t)^{\frac{3\alpha+1}{2}},
\end{align*}
where the constants are independent of $a_*$, by the hypotheses of Theorem \ref{Main Result}.

When $\alpha=\frac{1}{3}$, then the inequality becomes
\begin{align*}
    \frac{d^+}{dt}\nu(t)&\geq -(B(t)+C_{T_*})\nu(t).
\end{align*}
A direct application of Gr\"onwall's inequality, together with the fact that $\nu(0)=1$, lead to the estimate
\begin{align*}
    \nu(t)\geq e^{-\int_0^tB(\tau)\,d\tau-C_{T_*}t},
\end{align*}
for every $t\in[0,T_*)$.

When $\alpha>\frac{1}{3}$, $\frac{3\alpha+1}{2}>1$, hence the comparison ODE becomes
\begin{align*}
    \dot{f}(t)&=-B(t)f(t)-C_{T_*} f(t)^{\frac{3\alpha+1}{2}},
\end{align*}
with initial data $f(0)=1$.

By dividing by $f(t)^{\frac{3\alpha+1}{2}}$ and setting $\displaystyle g(t):=\frac{f(t)^{\frac{1-3\alpha}{2}}}{\frac{1-3\alpha}{2}}$, we obtain the ODE

\begin{align*}
    \dot{g}(t)&=-\frac{1-3\alpha}{2}B(t)g(t)-C_{T_*},
\end{align*}
and another direct application of Gr\"onwall's inequality shows that neither $g$, nor $f$ can hit zero in finite time. Thus, $\nu(t)$ cannot hit zero in finite time either.

Thus, there exists $c_{T_*}>0$ independent of the label $a_*$ such that $\nu(t)\geq c_{T_*}$, for every $t\in[0,T_*)$. Now, the relations $\tau_1\geq\tau_2\geq\nu$ and $\tau_1\tau_2\nu=1$ imply that $\tau_1\leq c_{T_*}^{-2} $, hence we necessarily have $\|F(a_*,t)\|_{op}\leq c_{T_*}^{-2}$ for every label $a_*$.

Since for every $a\in\mathbb{R}^3$, $\omega(\Phi_t(a))=F(a,t)\omega_0(a)$, it follows that 

\begin{align*}
    \|\omega(\Phi_t(a))\|_{L^\infty}\leq \sup_{a}\|F(a,t)\|_{\rm op}\|\omega_0\|_{L^\infty},
\end{align*}
hence 
\begin{align*}
  \int_0^{T_*}\|\omega(t)\|_{L^\infty}\,dt\leq T_*C_{T_*}^{-2}\|\omega_0\|_{L^\infty}<\infty. 
\end{align*}
By the Beale-Kato-Majda criterion, the solution $u$ continues past $T_*$, which contradicts maximality. Therefore, $T_*$ is not a blow-up time.

\section{The proof of Proposition \ref{Driver Estimate}}\label{Section 4}
In this section, we shall provide the proof of Proposition \ref{Driver Estimate}. The main idea will be to split the analysis into far and near labels.

By the Biot-Savart Law \eqref{Biot-Savart}, the driver will have the representation
\begin{align*}
    e\cdot S(x_*)e=\text{p.v.}\int_{\mathbb{R}^3}K_e(x_*-y)\omega(y,t),
\end{align*}
where $K_e$ is a Calderón-Zygmund kernel with the properties
\begin{align*}
    |K_e(h)|&\lesssim \frac{1}{|h|^3}\\
    \int_{\mathbf{S}^2}K_e(\theta)\,d\theta&=0
\end{align*}
Let $L>0$ be given by our near-field label condition. We also define $\chi_L=\chi(\cdot/L)$, where $\chi$ is a smooth radial function which is supported in $\displaystyle\left\{ |z|\geq\frac{1}{2}\right\}$, and is equal to $1$ on $\displaystyle\{ |z|\geq 1\}$.

We may then write
\begin{align*}
    \mathcal{D}&=p.v\int_{\mathbb{R}^3}K_e(x_*-y)\omega(y,t)\,dy\\
    &=p.v\int_{\mathbb{R}^3}(1-\chi_L(x_*-y))K_e(x_*-y)\omega(y,t)\,dy+p.v\int_{\mathbb{R}^3}\chi_L(x_*-y)K_e(x_*-y)\omega(y,t)\,dy\\
     &=\sum_{k=-\infty}^0p.v.\int_{2^{k-1}L\leq|x_*-y|\leq 2^{k}L}(1-\chi_L(x_*-y))K_e(x_*-y)\omega(y,t)\,dy\\
     &+\int \chi_L(x_*-y)K_e(x_*-y)\omega(y,t)\,dy\\
    &=\sum_{k=-\infty}^0\int_{2^{k-1}L\leq|x_*-y|\leq 2^{k}L}(1-\chi_L(x_*-y))K_e(x_*-y)(\omega(y,t)-\omega(x_*,t))\,dy\\
    &+\int_{|x_*-y|\geq L}\chi_L(x_*-y)K_e(x_*-y)\omega(y,t)\,dy\\
    &=\int_{|x_*-y|\leq L}(1-\chi_L(x_*-y))K_e(x_*-y)(\omega(y,t)-\omega(x_*,t))\,dy\\
    &+\int_{|x_*-y|\geq L}\chi_L(x_*-y)K_e(x_*-y)\omega(y,t)\,dy\\
    &:=\mathcal{D}_{\rm near}+\mathcal{D}_{\rm far}.
\end{align*}
We shall first analyze the estimate for the near labels, when $|x_*-y|\leq L$.

We have to bound

\begin{align*}
    \int_{|x_*-y|\leq L}(1-\chi_L(x_*-y))K_e(x_*-y)(\omega(y,t)-\omega(x_*,t))\,dy
\end{align*}

For an arbitrary label $a$, we write $\displaystyle a=a_*+V\eta$, where $\eta=(\eta_1,
\eta_2,\eta_3)$. We also recall the notation $\eta_\perp:=(\eta_1,\eta_2)$.

It then follows that when $a$ is close enough to  $\Phi_t(a)-\Phi_t(a_*)\simeq U\begin{pmatrix}\tau_1\eta_1\\\tau_2\eta_2\\\nu\eta_3\end{pmatrix}$. The near-field label condition imply that the Eulerian distance is given by
\begin{align*}
    d(a,t):&=|\Phi_t(a)-\Phi_t(a_*)|\simeq\sqrt{\tau_1^2\eta_1^2+\tau_2^2\eta_2^2+\nu^2\eta_3^2}.
\end{align*}

In particular, in the dyadic region $E_{\lambda_1,\lambda_2,\Lambda}$, the Eulerian distance will satisfy 
\begin{align*}
    d(a,t)&\simeq \nu\Lambda\sqrt{1+\lambda_1^2+\lambda_2^2}.
\end{align*}

We shall now split our analysis into two subregions: a cone region, in which $\lambda\simeq 1$, and an off-cone region, where $\lambda\not\simeq 1$. We shall first focus on the latter.

Our quantity of interest here is
\begin{align*}
\mathcal{D}_{\rm near}&=\int_{|x_*-y|\leq L}(1-\chi_L(x_*-y))K_e(x_*-y)(\omega(y,t)-\omega(x_*,t))\,dy
\end{align*}

Thanks to the cancellation in our integrand, we shall be able to repeatedly make use of the bound
\begin{align*}
    |K_e(x_*-y)|\lesssim \frac{1}{|x_*-y|^3}.
\end{align*}

Let $y=\Phi_t(a)$ for a suitable label $a$.

We also know that
\begin{align*}
    \omega(\Phi_t(a),t)&=F(a,t)\omega_0(a),
\end{align*}
hence
\begin{align*}
   \omega(\Phi_t(a),t)-\omega(\Phi_t(a_*),t)&=F(a,t)\omega_0(a)-F(a_*,t)\omega_0(a_*)\\
   &=(F(a,t)-F(a_*,t))\omega_0(a)+F(a_*,t)(\omega_0(a)-\omega_0(a_*)) 
\end{align*}

Let the integral contribution over the dyadic block $E_{\lambda_1,\lambda_2,\Lambda}$ be given by $\mathcal{D}_{\lambda_1,\lambda_2,\Lambda}$. More precisely,
\begin{align*}
    \mathcal{D}_{\lambda_1,\lambda_2,\Lambda}&=\int_{\substack{|x*-y|\leq L\\
    y\in \Phi_t(E_{\lambda_1,\lambda_2,\Lambda})}}K_e(x_*-y)(\omega(y,t)-\omega(x_*,t))\,dy\\
    &=\int_{\substack{|\Phi_t(a_*)-\Phi_t(a)|\leq L\\
    a\in E_{\lambda_1,\lambda_2,\Lambda}}}K_e(\Phi_t(a_*)-\Phi_t(a))(\omega(\Phi_t(a),t)-\omega(\Phi_t(a_*),t))\,da\\
    &=\int_{\substack{|\Phi_t(a_*)-\Phi_t(a)|\leq L\\
    a\in E_{\lambda_1,\lambda_2,\Lambda}}}K_e(\Phi_t(a_*)-\Phi_t(a))F(a_*,t)(\omega_0(a)-\omega_0(a_*))\,da\\
    &+\int_{\substack{|\Phi_t(a_*)-\Phi_t(a)|\leq L\\
    a\in E_{\lambda_1,\lambda_2,\Lambda}}}K_e(\Phi_t(a_*)-\Phi_t(a))(F(a,t)-F(a_*,t))\omega_0(a)\,da:=I_{\lambda_1,\lambda_2,\Lambda}+II_{\lambda_1,\lambda_2,\Lambda}
\end{align*}

In the change of variable, we have used the volume-preserving property of the incompressible flow.
\subsubsection{The off-cone region}

In this region, the distance satisfies $d\simeq \nu\Lambda(1+\lambda^2)^{\frac{1}{2}}$, where $\lambda^2=\lambda_1^2+\lambda_2^2$, and the kernel satisfies the bound
\begin{align*}
    |K_e(x_*-y)|\lesssim \nu^{-3}\Lambda^{-3}(1+\lambda^2)^{-\frac{3}{2}}
\end{align*}
Let $\rho_1:=\frac{\lambda_1\eta\Lambda}{\tau_1}$ and $\rho_2:=\frac{\lambda_2\eta\Lambda}{\tau_2}$. Thus, $|\eta_\perp|=\sqrt{\eta_1^2+\eta_2^2}\lesssim \frac{\lambda\nu\Lambda}{\tau_2}$. We also have $\|F(a_*,t)\|_{op}\leq\tau_1$. It follows that

\begin{align*}
    |I_{\lambda_1,\lambda_2,\Lambda,off-cone}|&\lesssim \tau_1\nu^{-3}\Lambda^{-3}(1+\lambda^2)^{-\frac{3}{2}}\int_{E_{\lambda_1,\lambda_2,\Lambda}}|\omega_0(\eta)-\omega_0(a_*)|\,d\eta\\
    &\lesssim \tau_1\nu^{-3}\Lambda^{-3}(1+\lambda^2)^{-\frac{3}{2}}M_\alpha b_\Lambda(V,a_*)\left(\frac{\lambda\nu\Lambda}{\tau_2}\right)^\alpha\int_{|\eta_\perp|\simeq\frac{\lambda\nu\Lambda}{\tau_2},|\eta_3|\simeq\Lambda}1\,d\eta\\
    &\lesssim \tau_1\nu^{-3}\Lambda^{-3}(1+\lambda^2)^{-\frac{3}{2}}M_\alpha b_\Lambda(V,a_*)\left(\frac{\lambda\nu\Lambda}{\tau_2}\right)^\alpha\frac{\lambda_1\lambda_2\nu^2\Lambda^2}{\tau_1\tau_2}\Lambda\\
    &\lesssim M_\alpha b_\Lambda(V,a_*) \nu^{\alpha-1}\Lambda^{\alpha}\tau_2^{-1-\alpha}\frac{\lambda^\alpha\lambda_1\lambda_2}{(1+\lambda^2)^{\frac{3}{2}}}.
\end{align*}

 We know that $\tau_1\tau_2\nu=1$ and $\tau_1=\kappa\tau_2$. Then, $\tau_2=\nu^{-\frac{1}{2}}\kappa^{-\frac{1}{2}}$ and $\tau_1=\nu^{-\frac{1}{2}}\kappa^{\frac{1}{2}}$. This implies that

\begin{align*}
    |I_{\lambda_1,\lambda_2,\Lambda,off-cone}|&\lesssim M_\alpha b_\Lambda(V,a_*)\Lambda^{\alpha}\frac{\lambda^{\alpha}\lambda_1\lambda_2}{(1+\lambda^2)^{\frac{3}{2}}}\nu^{\frac{3\alpha-1}{2}}\kappa^{\frac{\alpha+1}{2}}
\end{align*}

We have
\begin{align*}
    \sum_{\lambda\ll 1}\frac{\lambda^{\alpha}\lambda_1\lambda_2}{(1+\lambda^2)^{\frac{3}{2}}}+\sum_{\lambda\gg 1}\frac{\lambda^{\alpha}\lambda_1\lambda_2}{(1+\lambda^2)^{\frac{3}{2}}}&\lesssim \sum_{\lambda\ll 1}\lambda^{\alpha}\lambda_1\lambda_2+\sum_{\lambda\gg 1}\lambda^{\alpha-3}\lambda_1\lambda_2\\
    &\lesssim \sum_{\lambda_1,\lambda_2\ll 1}(\lambda_1^{\alpha}+\lambda_2^{\alpha})\lambda_1\lambda_2+\sum_{n\geq 0,\lambda\simeq 2^n}(n+1)2^{2n}2^{n(\alpha-3)}<\infty
\end{align*}
In the last term, we have used the fact that there are at most $O(n+1)$ dyadic pairs $(\lambda_1,\lambda_2)$ for which $\lambda_1^2+\lambda_2^2\simeq 2^{2n}$.

We then have
\begin{align*}
\sum_{\lambda\not\simeq 1}|I_{\lambda_1,\lambda_2,\Lambda,off-cone}|&\lesssim M_\alpha b_\Lambda(V,a_*)\Lambda^{\alpha}\nu^{\frac{3\alpha-1}{2}}\kappa^{\frac{\alpha+1}{2}}
\end{align*}

We now use the tail condition to sum over $\Lambda$, as well as the uniformity condition with respect to $V$ and $a_*$, to deduce that
\begin{align*}
\sum_{\Lambda}\sum_{\lambda\not\simeq 1}|I_{\lambda_1,\lambda_2,\Lambda,off-cone}|&\lesssim \sum_{\Lambda}M_\alpha b_\Lambda(V,a_*)\Lambda^{\alpha}\nu^{\frac{3\alpha-1}{2}}\kappa^{\frac{\alpha+1}{2}}\lesssim M_\alpha\mathcal{T}_\alpha\nu^{\frac{3\alpha-1}{2}}\kappa^{\frac{\alpha+1}{2}}\\
&\lesssim C_{T_*}\nu^{\frac{3\alpha-1}{2}},
\end{align*}
where the last inequality follows from the boundedness of $\kappa$ (the bounded transverse distortion condition).

For the $II$ contribution, we shall use the averaged Dini condition. To be more precise, we have

\begin{align*}
    |II_{\lambda_1,\lambda_2,\Lambda,off-cone}|&\lesssim \int_{E_{\lambda_1,\lambda_2,\Lambda}}\frac{
\left|
\bigl[F(a,t)-F(a_*,t)\bigr]\omega_0(a)
\right|
}{
\left[
\nu\Lambda(1+\lambda_1^2+\lambda_2^2)^{1/2}
\right]^3
}\,da,
\end{align*}
hence
\begin{align*}
    \sum_{\Lambda}\sum_{\lambda\not\simeq 1}|II_{\lambda_1,\lambda_2,\Lambda,off-cone}|&\lesssim \int_{E_{\lambda_1,\lambda_2,\Lambda}}\frac{\left|
\bigl[F(a,t)-F(a_*,t)\bigr]\omega_0(a)
\right|
}{
\left[
\nu\Lambda(1+\lambda_1^2+\lambda_2^2)^{1/2}
\right]^3
}\,da\lesssim B(t).
\end{align*}

We therefore have
\begin{equation}\label{Off-cone Driver Estimate}
\begin{aligned}
    |\mathcal{D}_{\rm off-cone}|\lesssim \sum_{\Lambda}\sum_{\lambda\not\simeq 1}|\mathcal{D}_{\lambda_1,\lambda_2,\Lambda}|&\lesssim \sum_{\Lambda}\sum_{\lambda\not\simeq 1}|I_{\lambda_1,\lambda_2,\Lambda}|+\sum_{\Lambda}\sum_{\lambda\not\simeq 1}|II_{\lambda_1,\lambda_2,\Lambda}|\\
    &\lesssim B(t)+C_{T_*}\nu(t)^{\frac{3\alpha-1}{2}}.
\end{aligned}
\end{equation}

\subsubsection{The cone region}

We now move our attention to the cone, which is defined by the condition $\lambda\simeq 1$. Here, the Eulerian distance satisfies $d(a,t)\simeq \nu\Lambda$, and the kernel satisfies the bound
\begin{align*}
    |K_e(x_*-y)|\lesssim \nu^{-3}\Lambda^{-3}.
\end{align*}
As before, let $\rho_1:=\frac{\lambda_1\eta\Lambda}{\tau_1}$ and $\rho_2:=\frac{\lambda_2\eta\Lambda}{\tau_2}$. Thus, $|\eta_\perp|=\sqrt{\eta_1^2+\eta_2^2}\lesssim \frac{\nu\Lambda}{\tau_2}$. We also have $\|F(a_*,t)\|_{op}\leq\tau_1$. It follows that

\begin{align*}
    |I_{\lambda_1,\lambda_2,\Lambda,cone}|&\lesssim \tau_1\nu^{-3}\Lambda^{-3}\int_{E_{\lambda_1,\lambda_2,\Lambda}}|\omega_0(\eta)-\omega_0(a_*)|\,d\eta\\
    &\lesssim \tau_1\nu^{-3}\Lambda^{-3}M_\alpha b_\Lambda(V,a_*)\left(\frac{\nu\Lambda}{\tau_2}\right)^\alpha\frac{\lambda_1\lambda_2\nu^2\Lambda^2}{\tau_1\tau_2}\Lambda\\
    &\lesssim M_\alpha b_\Lambda(V,a_*) \nu^{\alpha-1}\Lambda^{\alpha}\tau_2^{-1-\alpha}\lambda_1\lambda_2.
\end{align*}

 We know that $\tau_1\tau_2\nu=1$ and $\tau_1=\kappa\tau_2$. Then, $\tau_2=\nu^{-\frac{1}{2}}\kappa^{-\frac{1}{2}}$ and $\tau_1=\nu^{-\frac{1}{2}}\kappa^{\frac{1}{2}}$. This implies that

\begin{align*}
    |I_{\lambda_1,\lambda_2,\Lambda,cone}|&\lesssim M_\alpha b_\Lambda(V,a_*)\Lambda^{\alpha}\frac{\lambda^{\alpha}\lambda_1\lambda_2}{(1+\lambda^2)^{\frac{3}{2}}}\nu^{\frac{3\alpha-1}{2}}\kappa^{\frac{\alpha+1}{2}}
\end{align*}

We now use the tail condition to sum over $\Lambda$, and we have
\begin{align*}
\sum_{\substack{\Lambda\\\lambda\simeq 1}}|I_{\lambda_1,\lambda_2,\Lambda,cone}|&\lesssim \sum_{\Lambda}M_\alpha b_\Lambda(V,a_*)\Lambda^{\alpha}\nu^{\frac{3\alpha-1}{2}}\kappa^{\frac{\alpha+1}{2}}\lesssim M_\alpha\mathcal{T}_\alpha\nu^{\frac{3\alpha-1}{2}}\kappa^{\frac{\alpha+1}{2}}\\
&\lesssim C_{T_*}\nu(t)^{\frac{3\alpha-1}{2}},
\end{align*}
where the last inequality follows from the boundedness of $\kappa$ (the bounded transverse distortion condition).

For the $II$ contribution, we once again use the Dini condition to obtain that

\begin{align*}
    \sum_{\substack{\Lambda\\\lambda\simeq 1}}|II_{\lambda_1,\lambda_2,\Lambda,off-cone}|&\lesssim  B(t).
\end{align*}
We therefore have
\begin{equation}\label{Cone Driver Estimate}
\begin{aligned}
    |\mathcal{D}_{\rm cone}|\lesssim \sum_{\substack{\Lambda\\\lambda\simeq 1}}|\mathcal{D}_{\lambda_1,\lambda_2,\Lambda}|&\lesssim \sum_{\substack{\Lambda\\\lambda\simeq 1}}|I_{\lambda_1,\lambda_2,\Lambda}|+\sum_{\substack{\Lambda\\\lambda\simeq 1}}|II_{\lambda_1,\lambda_2,\Lambda}|\\
    &\lesssim B(t)+C_{T_*}\nu(t)^{\frac{3\alpha-1}{2}}.
\end{aligned}
\end{equation}

We now move our attention to the far labels, which consists of the points $y$ with $|x_*-y|\geq L$. We recall that

\begin{align*}
    \mathcal{D}_{\rm far}&=\int \chi_L(x_*-y)K_e(x_*-y)\omega(y,t)\,dy.
\end{align*}

By integrating by parts, we have
\begin{align*}
 \mathcal{D}_{far}&=-\int \nabla_y\times(\chi_L(x_*-y)K_e(x_*-y))u(y,t)\,dy.
\end{align*}
We have the bound
\begin{align*}
 |\nabla_y\times (\chi_L(x_*-y)K_e(x_*-y))|&\lesssim |x_*-y|^{-4}+L^{-1}|x_*-y|^{-3}\mathbf{1}_{\frac{L}{2}\leq|x_*-y|\leq L},   
\end{align*}
hence by Cauchy-Schwarz's inequality,
\begin{align*}
    \Big|&\int \chi_L(x_*-y)\nabla_y\times K_e(x_*-y)u(y,t)\,dy\Big|\lesssim \int_{|x_*-y|\geq\frac{L}{2}}|x_*-y|^{-4}|u(y,t)|\,dy\\
    &+L^{-1}\int_{L\geq |x_*-y|\geq\frac{L}{2}}|x_*-y|^{-3}|u(y,t)|\,dy\\
    &\lesssim \int_{|y|\geq\frac{L}{2}}|y|^{-4}|u(x_*-y,t)|\,dy+L^{-1}\int_{L\geq |y|\geq\frac{L}{2}}|y|^{-3}|u(x_*-y,t)|\,dy\\
    &\lesssim (\||y|^{-4}\|_{L_y^2(|y|\geq\frac{L}{2})}+L^{-1}\||y|^{-3}\|_{L_y^2(L\geq |y|\geq\frac{L}{2})})\|u(t)\|_{L^2}\\
    &\lesssim L^{-\frac{5}{2}}\|u_0\|_{L^2}
\end{align*}

Here, we have used the energy conservation property $\|u(t)\|_{L^2}=\|u_0\|_{L^2}$.

These lead to the bound
\begin{equation}\label{Far Driver Estimate}
  |\mathcal{D}_{far}| \lesssim C_{T_*}. 
\end{equation}
Estimates \eqref{Off-cone Driver Estimate}, \eqref{Cone Driver Estimate}, and \eqref{Far Driver Estimate} now imply that 
\begin{align*}
    |\mathcal{D}_*(t)|\leq |\mathcal{D}_{\rm near}|+|\mathcal{D}_{\rm far}|\leq |\mathcal{D}_{\rm off-cone}|+|\mathcal{D}_{\rm cone}|+|\mathcal{D}_{\rm far}|\leq B(t)+C_{T_*}\nu(t)^{\frac{3\alpha-1}{2}}, 
\end{align*}
where $B(t)$ is positive and in $L^1([0,T))$, as desired. Here, we have absorbed the far-field contribution $C_{T_*}$ into $B(t)$.

\section{The proof of Theorem \ref{Main Result 2}}\label{Section 5}
This is very similar to the one of Theorem \ref{Main Result}, and is in fact a simplification. We shall omit some details, while only focusing on the aforementioned simplifications.

Let $a_*=(0,\Theta,Z)$ be an arbitrary axis label.

We recall that the deformation matrix is given by
 \begin{align*}
  F_*(t):&=\begin{pmatrix}
\partial_R\varphi^r & 0 & \partial_Z\varphi^r\\
0 & \frac{\varphi^r}{R} & 0\\
\partial_R\varphi^z & 0 & \partial_Z\varphi^z
      \end{pmatrix}   
 \end{align*}

By axisymmetry, we have the expansions
\begin{align*}
  \varphi^r(R,Z,t)&=a(Z,t)R+O(R^3)\\
  \varphi^z(R,Z,t)&=b(Z,t)+O(R^2).
\end{align*}

It follows that $\partial_R\varphi^r(0,Z,t)=a(Z,t)$, $\partial_R\varphi^z(0,Z,t)=0$,$\partial_Z\varphi^r(0,Z,t)=0$, and that
\begin{align*}
    \lim_{\substack{R\rightarrow 0}}\frac{\varphi^r(R,Z,t)}{R}&=a(Z,t).
\end{align*}
Therefore, in a suitable orthogonal frame, the deformation matrix for the axis label $a_*$ is given by
\begin{align*}
  F_{\rm axis}(t):&=\begin{pmatrix}
a & 0 & 0\\
0 & a & 0\\
0 & 0 & \partial_Z\varphi^z
      \end{pmatrix}   
 \end{align*}

The reduced meridional Jacobian is given by
\begin{align*}
    J(R,Z,t)=\det D_{R,Z}(\varphi^r,\varphi^z)=\frac{R}{\varphi^r(R,Z,t)}.
\end{align*}
By taking the limit as $R\rightarrow 0$, we get that
\begin{align*}
    J_*(t)=J(0,Z,t)=\frac{1}{a(Z,t)}
\end{align*}

By incompressibility, $\det F_{\rm axis}=1$, hence we immediately get that $\partial_Z\varphi^z=a^{-2}=J^2$.

In particular, this shows that the singular values of $F_{\rm axis}$ are given by $\tau_1=\tau_2=J_*^{-1}$ and that the smallest singular value is given by $\nu=J_*^2$. In particular, $\kappa(t)=\frac{\tau_1(t)}{\tau_2(t)}=1$.

From a geometrical point of view, this tells us that in the regime $J_*<1$, which is the only relevant one for collapse, the radial/azimuthal directions expand like $J_*^{-1}$, while the axial directions contract like $J_*^2$.

The near-field axis comparability condition implies that $r\sim J_*^{-1}R$ and $z-z_*\simeq J_*^2(Z-Z_*)$, which in turn allows us to recover Shkoller's $J^3$ drift law:
\begin{align*}
    \frac{r}{z-z_*}&=J_*^{-3}\frac{R}{Z-Z_*}
\end{align*}

We note that in this case, $E_{\nu(t)}$ is spanned by the axis unit vector, and that the only candidate for the driver is
\begin{align*}
    W_*(t)&:=e_z\cdot Se_z=\partial_zu^z(0,\varphi^z(0,Z,t),t).
\end{align*}

In this case, the desired estimate takes the form
\begin{align*}
    |W_*(t)|\leq B(t)+C_TJ_*(t)^{3\alpha-1},
\end{align*}
where the hypotheses of Theorem \ref{Main Result 2} guarantee that the constant $C_T$ and the function $B$ are independent of the on-axis label $a_*$. Here, just like in the general label case, $B(t)$ is a positive function in $L^1([0,T))$ which controls the far-field contribution, and generates a lower-order term.

From Shkoller \cite{Shkoller}, we know that
\begin{align*}
    \frac{d}{dt}J_*(t)=\frac{1}{2}J_*(t)W_*(t)
\end{align*}
We immediately get that
\begin{align*}
  \frac{d}{dt}J_*(t)&\geq -B(t)J_*(t)-C_T J_*(t)^{3\alpha}.  
\end{align*}

When $3\alpha=1$, then the inequality becomes
\begin{align*}
  \frac{d}{dt}J_*(t)&\geq -(B(t)+C_{T_*})J_*(t).  
\end{align*}
A direct application of Gr\"onwall's inequality, together with the fact that $J_*(0)=1$, lead to the estimate
\begin{align*}
    J_*(t)\geq e^{-\int_0^tB(\tau)\,d\tau-C_{T_*}t},
\end{align*}
for every $t\in[0,T_*)$. 

When $\alpha>\frac{1}{3}$, $3\alpha>1$, hence the comparison ODE becomes
\begin{align*}
    \dot{f}(t)&=-B(t)f(t)-C_{T_*} f(t)^{3\alpha},
\end{align*}
with initial data $f(0)=1$.

By dividing by $f(t)^{3\alpha}$ and setting $\displaystyle g(t):=\frac{f(t)^{1-3\alpha}}{1-3\alpha}$, we obtain the ODE

\begin{align*}
    \dot{g}(t)&=-(1-3\alpha)B(t)g(t)-C_{T_*},
\end{align*}
and another direct application of Gr\"onwall's inequality shows that neither $g$, nor $f$ can hit zero in finite time. Thus, $J_*(t)$ cannot hit zero in finite time either.

In either case, we have proved that $J_*(t)\geq c_T>0$, where $c_T$ is a constant independent of the on-axis label $a_*$. 

We now return to the proof of the estimate. The only relevant case for us is $J_*(t)\leq1$. Let $a=a_*+\eta$. By the Biot-Savart Law \ref{Biot-Savart},
\begin{align*}
   W_*(t)&=c_0\int_{\mathbb{R}^3}\frac{r_y(z_*-z_y)}{|x_*-y|^5}\omega^\theta(y)\,dy\\
   &=c_0\int_{\mathbb{R}^3_a}\frac{r(\Phi_t(a))(z_*-z(\Phi_t(a)))}{|x_*-\Phi_t(a)|^5}\omega^\theta(\Phi_t(a))\,da,
\end{align*}
where $\omega^\theta=\partial_zu^r-\partial_ru^z$, and $c_0$ is a constant.

We note that since $\omega_0(a_*)=0$, we must also have $\omega(x_*,t)=0$, hence the integral is properly defined, with no principal value needed.

In what follows, we shall only focus on the near-field labels, which as before, are defined as the ones satisfying $|\Phi_t(a)-\Phi_t(a_*)|\leq L$.

Since the matrix $F_*(t)$ has already been seen to be diagonal with entries $J_*^{-1}$, $J_*^{-1}$, and $J_*^2$, by the near field axis condition, we have

\begin{align*}
    r(\Phi_t(a))&\simeq\frac{R}{J_*}\\
    z_*-z(\Phi_t(a))&\simeq J^2_*\Delta Z
\end{align*}

Let \begin{align*}
    \Lambda&=|\Delta Z|,\\
    \lambda&=\frac{R}{J_*^3\Lambda}.
\end{align*}
We now introduce the dyadic block $E_{\lambda,\Lambda}$ defined by
\begin{align*}
    E_{\lambda,\Lambda}:=\left\{|\eta_3|\simeq\Lambda,|\eta_\perp|\simeq\lambda J^3_*\Lambda\right\}.
\end{align*}

On the dyadic block $E_{\lambda,\Lambda}$, we have $\displaystyle |x_*-\varphi(a,t)|\simeq J^2_*\Lambda(1+\lambda^2)^{\frac{1}{2}}$.

We then define the cone as the region where $\lambda\simeq 1$, and the off-cone region as the one where $\lambda\not \simeq 1$.

We first analyze the changes in the near labels.

In this dyadic block, the kernel in the integral satisfies
\begin{align*}
    |K_{W*,\Lambda,\lambda}|&=\left|\frac{r(\Phi_t(a))(z_*-z(\Phi_t(a)))}{|x_*-\Phi_t(a)|^5}\right|\lesssim \frac{\lambda J^2_*\Lambda J^2_*\Lambda}{J^{10}_*\Lambda^5(1+\lambda^2)^{\frac{5}{2}}}=\frac{\lambda}{J^{6}_*\Lambda^3(1+\lambda^2)^{\frac{5}{2}}}
\end{align*}

For $\omega_0$, we have
\begin{align*}
    |\omega^\theta_0(a)|\leq |\omega_0(a)|&=\left|\omega_0(a)-\omega_0(a_*)\right|\lesssim  \lambda^{\alpha}J^{3\alpha}_*\Lambda^{\alpha}b_\Lambda(a_*)
\end{align*}
By the near field axis condition, since we are in the collapsing regime $J_*(t)<1$, which is the only one relevant for us, we have $\|D\Phi_t(a)\|_{op}\simeq \|D\Phi_t(a_*)\|_{op}=J_*(t)^{-1}$. This implies that
\begin{align*}
    |\omega^\theta(\Phi_t(a),t)\leq|\omega(\Phi_t(a),t)|&=|D_a\Phi_t(a)\omega_0(a)|\leq \|D_a\Phi_t(a)\|_{op}|\omega_0(a)|\leq J_*(t)^{-1}|\omega_0(a)|\\
    &\lesssim J_*(t)^{-1}\lambda^{\alpha}J^{3\alpha}_*\Lambda^{\alpha}b_\Lambda(a_*).
\end{align*}
We now pass to cylindrical coordinates in the Biot-Savart Law
\begin{align*}
   W_*(t)&=-\frac{3}{2}\int_{\mathbb{R}^3_a}\frac{r(\Phi_t(a))(z_*-z(\Phi_t(a)))}{|x_*-\Phi_t(a)|^5}\omega(\Phi_t(a))R\,dR\,dZ\\
   &=-\frac{3}{2}\int_{\mathbb{R}^3_a}\frac{r(\Phi_t(a))(z_*-z(\Phi_t(a)))}{|x_*-\Phi_t(a)|^5}(D_a\Phi_t(a)\omega_0(a))R\,dR\,dZ
\end{align*}
in a dyadic region where $\lambda$ and $\Lambda$ are fixed. In particular, the integration measure $R\,dR\,dZ$ satisfies $\displaystyle R\,dR\,dZ\simeq\lambda^2 J_*^6\Lambda^3$.

Let $W_{*,\lambda,\Lambda}$ be the contribution of the integral defining $W_*$ in the region $E_{\lambda,\Lambda}$.

We obtain the estimate
\begin{align*}
   |W_{*,\lambda,\Lambda}|&\lesssim C_T\int_{R\simeq\lambda J^3_*\Lambda,\Delta Z\simeq\Lambda} \frac{\lambda}{J^{6}_*\Lambda^3(1+\lambda^2)^{\frac{5}{2}}} \lambda^{\alpha}J^{3\alpha}_*b_\Lambda(a_*)\Lambda^{\alpha}J_*^{-1}\lambda J_*^3\Lambda\,dR\,dZ\\
   &\lesssim C_T\frac{\lambda}{J^{6}_*\Lambda^3(1+\lambda^2)^{\frac{5}{2}}} \lambda^{\alpha}J^{3\alpha}_*b_\Lambda(a_*)\Lambda^{\alpha}J_*^{-1}\lambda^2 J_*^6\Lambda^3\\
   &\lesssim C_T\frac{\lambda^{\alpha+3}}{(1+\lambda^2)^{\frac{5}{2}}} J^{3\alpha-1}_* \Lambda^{\alpha}b_\Lambda(a_*)
\end{align*}

Thus, 
\begin{align*}
 |W_{*,near}| &\lesssim \sum_{\Lambda}\sum_{\lambda}\frac{\lambda^{\alpha+3}}{(1+\lambda^2)^{\frac{5}{2}}} J^{3\alpha-1}_* \Lambda^{\alpha}b_\Lambda(a_*)\lesssim J^{3\alpha-1}_*\sum_{\Lambda}\Lambda^{\alpha}b_\Lambda(a_*)\sum_{\lambda}\frac{\lambda^{\alpha+3}}{(1+\lambda^2)^{\frac{5}{2}}}\\
 &\lesssim J^{3\alpha-1}_*\sum_{\Lambda}\Lambda^{\alpha}b_\Lambda(a_*)\left(\sum_{\lambda<1}\frac{\lambda^{\alpha+3}}{(1+\lambda^2)^{\frac{5}{2}}}+\sum_{\lambda\geq 1}\frac{\lambda^{\alpha+3}}{(1+\lambda^2)^{\frac{5}{2}}}\right)\\
 &\lesssim J^{3\alpha-1}_*\sum_{\Lambda}\Lambda^{\alpha}b_\Lambda(a_*)\left(\sum_{\lambda<1}\lambda^{\alpha+3}+\sum_{\lambda\geq 1}\lambda^{\alpha-2}\right)\\
 &\lesssim C_{T_*}J^{3\alpha-1}_*
\end{align*}

For the far-field labels, the previous energy argument works in exactly the same way, and we omit the proof. We note that as we have already claimed, the averaged Dini coherence condition was no longer necessary for on axis labels.
\section*{Acknowledgements} 
The author would like to thank Steve Shkoller for his very interesting talk in the Yale Analysis Seminar, for carefully reading the final draft and for providing very helpful comments, and Wilhelm Schlag for suggesting this project and for very helpful discussions. 

The author would also like to thank the Yale Mathematics Department for its hospitality during the 2025-2026 academic year.

The author used ChatGPT 5.5 Pro for exploratory discussion and editorial feedback during the preparation of this manuscript. All mathematical statements, proofs, and final formulations are humanly written and are the author’s responsibility.
\bibliography{bibliography.bib}
\bibliographystyle{plain}
\end{document}